\newtheorem{theorem}{Theorem}[section]
\newtheorem{lemma}{Lemma}[section]
\newtheorem{definition}{Definition}[section]
 \newcommand{\<}{\left\langle}
\renewcommand{\>}{\right\rangle}
\newcommand{\eps}{\varepsilon}
\newcommand{\be} {\begin{equation}}
\newcommand{\ee} {\end{equation}}
\newcommand{\bea} {\begin{eqnarray}}
\newcommand{\eea} {\end{eqnarray}}
\newcommand{\Bea} {\begin{eqnarray*}}
\newcommand{\Eea} {\end{eqnarray*}}
\newcommand{\pa} {\partial}
\newcommand{\al} {\alpha}
\newcommand{\ba} {\beta}
\newcommand{\de} {\delta}
\newcommand{\na}{\nabla}
\newcommand{\ga} {\gamma}
\newcommand{\Om} {\Omega}
\newcommand{\De} {\Delta}
\newcommand{\la} {\lambda}
\newcommand{\R}{\mathbb R}
\newcommand{\N}{\mathbb N}
\newcommand{\Rn}{\mathbb R^N}
\newcommand{\Iom}{\int_{\Omega}}
\newcommand{\deb}{\rightharpoonup}
\makeatletter \@addtoreset{equation}{section} \makeatother
\begin{document}
	\title[On the existence  of semilinear non-local elliptic systems ]{On the study of semilinear non-local elliptic systems }
	\thanks{The first author's research is funded by the Czech Science Foundation, project GJ19--14413Y. The second author's research is supported by Marie Sk{\l}odowska-Curie Individual Fellowships H2020-MSCA-IF-2019, P 888255. }
\author{Debangana Mukherjee}
\address{Department of Mathematics and Statistics, Masaryk University, 61137 Brno,  Czech Republic}

\email{mukherjeed@math.muni.cz, \,\, debangana18@gmail.com }

\author{Debopriya Mukherjee}
\address{Department of Mathematics and Information Technology, Montanuniversit\"at, Franz Josef Strasse, Leoben-8700, Austria}

\email{debopriya.mukherjee@unileoben.ac.at}

	\subjclass[2010]{Primary 35J05, 35J50,  35J60, 35R11 }
\keywords{Fractional Laplacian, Semilinear elliptic systems, Weak solution, Variational methods.}
\maketitle
\date{}

\begin{abstract} 
	The purpose of this paper is to study the existence of solutions for
	 semilinear  elliptic system driven by fractional Laplacian
	and establish some new existence results  which are obtained by virtue of the local linking theorem and the saddle point theorem. To make the nonlinear scheme feasible, rigorous analysis of the function space involved and corresponding energy functional is necessary.

	
\end{abstract}


\section{Introduction}
A very captivating field in nonlinear analysis embraces the study of elliptic equations involving fractional operators. Freshly, tremendous thoughtfulness is given to such problems, in light of pure mathematical research and taking into account concrete real world applications. Certainly, such operators turn out naturally in different conditions, serving as several physical phenomena, population dynamics, mathematical finance, probability theory and many more. 
	In recent years, the existence  and multiplicity of solutions for elliptic systems have been extensively contemplated. 
	Precisely, for a quick review on the existence of nontrivial solutions for Laplace  systems, we refer \cite{Conti-Terracini-02,Costa-94, Drabek-01}. 
Existence of solutions and multiple solutions for some elliptic problems which involves the square root of the Laplacian with sign-changing weight is investigated in  \cite{Yu-12}.
In \cite{Zhang-09}, the authors have also obtained existence of weak solutions by using the technique of variational methods for a class of semilinear and quasilinear elliptic systems. 
A very delicate analysis is carried out in proving the existence results for some nonlinear ellipic systems
via analysis on Palais-Smale condition in \cite{Costa-94}.

	In past few years, several authors have studied the following type of semilinear and quasilinear elliptic problems,
\begin{equation}\label{pblm}
\begin{aligned}
\begin{cases}
&-\De u =f(x,u)\,\text{ in }\,\Om,\\
&u=0\,\text{ on }\, \pa \Om,
\end{cases}
\end{aligned}
\end{equation}
where $\Om$ is a bounded domain in $\Rn$. Here, $f$ is a nonlinear reaction term which
has been widely investigated about existence of solutions by using artistry tactics from nonlinear functional analysis, essentially using variational methods, degree theory, sub and supersolutions,
see \cite{Amann,Ambro,Amb-Rabin-73,Drabek-01}. A good amount of inquest and research has been 
accomplished in the study of $p$-Laplacian $\De_p u=\text{div}(|\na u|^{p-2} \na u)$
of (\ref*{pblm}), where $1<p<\infty$, see the book \cite{Drabek-97}
for more resource.

P. H. Rabinowitz in [Theorem 5.3, {\cite{Rabin-86}}] has established existence of weak solution by means of generalized Mountain Pass Theorem
for (\ref{pblm}) when $f(x,u)=\la a(x)u+g(x,u)$ in  a bounded domain $\Om \subset \Rn$ whose boundary is a smooth manifold under suitable assumptions on $a(x)$ and $g(x)$.
 Contemplating on elliptic systems in the classical case, Zou \cite{Zou-01} has augmented on the following problems;
\begin{equation*}
	\begin{cases}
	\begin{aligned}
	-\De u&= \la u\pm \de v + F_u(x,u,v) \,\text{ in }\,\Om,\\
		-\De v&= \de u\pm \ga v + F_v(x,u,v) \,\text{ in }\,\Om,\\
		u&=v=0\,\text{ on }\, \pa \Om,
	\end{aligned}	
\end{cases}
\end{equation*}	
and obtained existence of infintely many solutions with small energy assuming appropriate conditions on $F$. Zhang and Zhang in \cite{Zhang-09} have build up the existence result 
in pursuit of the system of (\ref{pblm})
employing minimax methods. 

In the non-local framework,
see some attributing references	\cite{Bhakta-3, Brasco-16,Shibo-16,Lindqvist,Squassina-16, Deba-tuhi} which has been carried through in assaying existence and multiplicity results,
under necessary assumptions of $f$. 
Multiplicity results for a class of non-local elliptic operators
by means of variational and topological methods using Morse theory is investigated in \cite{Shibo-16}.
 In \cite{Quaas-18}, the authors have demonstrated the existence result of atleast one positive solution for fractional Laplace system
whose proof heavily relies on the topological degree theory.
Very recently, in \cite{Ser-1}, a delving work to obtain existence of a non-trivial non-negative solution for non-local fractional operators is done using an iterative proficiency and a penalization method.

Motivated by the above literature, allowing for
 $\Om \subset \Rn$ to  be an open, smooth, bounded domain with smooth boundary, $s \in (0,1),N>2s$, $\la \in \R$,
 we investigate the following non-local semilinear elliptic system
\begin{equation*}
(\mathcal{P}_\la)
\left\{\begin{aligned}
(-\De)^s u  &=\la \big( f(x)u+g(x)v  \big) +F_u(x,u,v)   \quad\text{in }\quad \Om, \\
(-\De)^s v  &=\la \big( g(x)u+ h(x)v  \big) +F_v(x,u,v)   \quad\text{in }\quad \Om, \\
u =v=& 0  \quad\text{in }\quad \Rn \setminus \Om.
\end{aligned}
\right.
\end{equation*}
Here the non-local Operator $(-\Delta)^s$ is defined as follows:
	\begin{align} \label{frac-s}
	(-\Delta)^s u(x)=\lim_{\eps\to 0}\int_{\mathbb{R}^N\setminus B_\eps(x)}\frac{(u(y)-u(x))}{|x-y|^{N+2s}}dy,\,\,\,x\in\mathbb{R}^N,
	\end{align}
	and $F \in C^1(\bar{\Om} \times \R^2)$, $f,g,h \in C(\bar{\Om})$ with $F_u=\frac{\pa F}{\pa u}$, $F_v=\frac{\pa F}{\pa v}$.
	Our aim in this article is to find existence of weak solutions for $(\mathcal{P}_\la)$.
	In the classical case of the Laplacian $-\De$, the analogue of Theorems \ref{thm-1} and \ref{thm-2} are provided in \cite{Wu-NA}; in this context, these results are canonical but a natural extension of classical results to the non-local fractional framework.
	By virtue of the Laplacian, which is local  by nature, reckoning on non-local operators demand information throughout the whole domain; the virtue of our work lies in overcoming these difficulties appeared due to the fractional pattern. The aspect of our article lies on the ground about the study of fractional semilinear ellitpic system on a bounded domain in $\Om$ using variational methods, involving a careful survey on weighted eigenvalue problem in the fractional picture. As far as we know, the results accomplished in this manuscript are new and are not handy in literature.

	\textbf{Notation}. 
	Throughout this paper, we denote by $c, C ,C_i$ (for $i=1,2, \cdots$) the generic positive constants which may vary from line to line. We mark by
	$|u|_p:=\left(\int_\Om |u(x)|^p dx\right)^\frac{1}{p}$, the $L^p$ norm in $\Om$.
	Corresponding to any $r>1$, we signify by $r'$, the conjugate of $r$, which is $
	\frac{r}{r-1}$. 

\subsection{Hypothesis}We introduce the following conditions on the parameters and functions involved in the system $(\mathcal{P}_\la)$.
\begin{itemize}
	\item [\textit{$(\bf H_1)$}]
	$g(x) \geq 0$ for all $x \in \bar{\Om}$.
	\item [\textit{$(\bf H_2)$}]
	$\max_{x \in \bar{\Om}} \max \{ f(x), h(x)  \} \geq 0$.	
	\item [\textit{$(\bf H_3)$}]
	There exists $C_1>0$ and $2<q<2_s^*$ such that
	\begin{equation*}
	  \big| F_u(x,u,v) \big| +\big|F_v(x,u,v)\big| \leq 	C_1 \big(1+ |u|^{q-1}+|v|^{q-1} \big),
	\end{equation*}
	for all $(x, u, v) \in (\Om \times\R^2 )$ where 
	\begin{gather*}
	2^*_s=\begin{cases}
	\frac{2N}{N-2s},\, 2s<N,\\
	+\infty,\, 2s \geq N.
	\end{cases}
	\end{gather*}
		\item [\textit{$(\bf H_4)$}]
		There exists $a \in L^{\infty}(\Om)$ such that
		\begin{gather*}
		\limsup_{|(u,v)| \to \infty} \frac{|F_u(x,u,v)|+|F_v(x,u,v)|}{ \big( |u|^2+|v|^2  \big)^{\frac{1}{2}}} \leq a(x) ,\,\text{uniformly for a.e.}\, x \in \Om. 
		\end{gather*}
			\item [\textit{$(\bf H_5)$}]
			There holds
			\begin{equation*}
		\lim_{|(u,v)| \to 0}	\frac{|F_u(x,u,v)|+|F_v(x,u,v)|}{\big(|u|^2+|v|^2\big)^{\frac{1}{2}}}=0 \,\text{uniformly for a.e.}\, x \in \Om. 
		\end{equation*}
			\item [\textit{$(\bf H_6)$}]
			There holds
				\begin{equation*}
			\lim_{|(u,v)| \to 0}	\frac{F(x,u,v)}{|u|^2+|v|^2}=\infty \,\text{uniformly for a.e.}\, x \in \Om. 
			\end{equation*}
				\item [\textit{$(\bf H_7)$}] There exists
			\begin{equation*}
			p \begin{cases}
			\geq \frac{2N}{N+2s}(q-1) \, N>2s,\\
			>q-1, \, N \leq 2s,
			\end{cases}
			\end{equation*}	
				such that
			\begin{equation*}
			\liminf_{|(u,v)| \to \infty} \frac{F_u(x,u,v)u+F_v(x,u,v)v-2F(x,u,v) }{\big( |u|^2+|v|^2 \big)^{\frac{p}{2}}},
			\end{equation*}	
			uniformly for a.e. $x \in \Om$.
			\item [\textit{$(\bf H_8)$}]	
			There exists constant 
			\begin{equation*}
			\ga \begin{cases}
			\geq \frac{2N}{N+2s}, N>2s,\\
			>1, N \leq 2s,
			\end{cases}
			\end{equation*}
			such that 
    \begin{equation*}
    \limsup_{(u,v) \to \infty} \frac{F_u(x,u,v)u+F_v(x,u,v)v-2F(x,u,v) }{\big( |u|^2+|v|^2 \big)^{\frac{\ga}{2}}},
    \end{equation*}			
				uniformly for a.e. $x \in \Om$.
			\item [\textit{$(\bf H_9)$}]		
			There holds
			\begin{equation*}
		\liminf_{|(u,v)| \to \infty} \frac{F_u(x,u,v)u+F_v(x,u,v)v}{\big( |u|^2+|v|^2 \big)} \geq 0,
		\end{equation*}	
		uniformly for a.e. $x \in \Om$.		
\end{itemize}

\subsection{Main results}

To use variational methods for semilinear elliptic system $(\mathcal{P}_\la)$, we first study non-local weighted eigenvalue problem
\begin{equation*}
	(\mathcal{E}_\la)
\left\{\begin{aligned}
(-\De)^s u  &=\la \big( f(x)u+g(x)v  \big)  \quad\text{in }\quad \Om, \\
(-\De)^s v  &=\la\big( g(x)u+ h(x)v\big)    \quad\text{in }\quad \Om, \\
u =v=& 0  \quad\text{in }\quad \Rn \setminus \Om.
\end{aligned}
\right.
\end{equation*}
With the help of Hypotheses $(H_1)-(H_2)$ and using the spectral theory of symmetric compact operators, we infer that there exists a sequence of eigenvalues $0<\la_1<\la_2 \leq \cdots \leq \la_k \to \infty$ of $(\mathcal{E}_\la)$.
 We denote the normalized eigenfunction corresponding to the eigenvalue $\la_1$ by $(\phi_1,\psi_1)$. For technical analysis, we choose $\phi_1>0$ in $\Om$ and $\psi_1>0$ in $\Om$. 
 
 In the first outcome of the manuscript, we demonstrate existence of non-trivial weak solution of the system $(\mathcal{P}_\la)$ under some postulates  for $\la$ varying in consecutive distinct eigenvalues.
 \begin{theorem}\label{thm-1}
 Let us assume that Hypotheses 	$(H_1)$-$(H_3)$, $(H_5)$-$(H_7)$ be satisfied. Then, for every $k \in \N$ 
  and for every $\la \in (\la_k, \la_{k+1}),$ provided $\la_k \neq \la_{k+1}$,  the system $(\mathcal{P}_\la)$ has one non-trivial weak solution.
 \end{theorem}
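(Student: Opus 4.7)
The plan is to apply a local linking theorem (in the spirit of Brezis--Nirenberg/Luan--Mao) to the natural energy functional associated with $(\mathcal P_\la)$ on the product fractional Sobolev space $E:=X_0^s(\Om)\times X_0^s(\Om)$, where $X_0^s(\Om)$ is the usual space of functions in $H^s(\Rn)$ vanishing outside $\Om$ equipped with the Gagliardo seminorm $\|\cdot\|_s$. Set
\be
I_\la(u,v)=\f12\bigl(\|u\|_s^2+\|v\|_s^2\bigr)-\f\la 2\int_\Om\bigl(f u^2+2g u v+h v^2\bigr)\,dx-\int_\Om F(x,u,v)\,dx.\no
\ee
Using $(H_3)$, the Sobolev embedding $X_0^s\hookrightarrow L^q(\Om)$ for $q<2^*_s$, and the fact that $f,g,h\in C(\bar\Om)$, one checks that $I_\la\in C^1(E,\R)$ and that critical points of $I_\la$ are exactly the weak solutions of $(\mathcal P_\la)$.

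Next, I would decompose $E=Y\oplus Z$ where $Y=\Span\{(\phi_1,\psi_1),\dots,(\phi_k,\psi_k)\}$ is the finite-dimensional span of the first $k$ eigenpairs of $(\mathcal E_\la)$ and $Z$ is its orthogonal complement with respect to the inner product naturally associated with $(\mathcal E_\la)$. Writing the quadratic part of $I_\la$ as $Q_\la(u,v)=\f12\|(u,v)\|_E^2-\f\la 2 B(u,v)$ with $B$ the bilinear form from $(\mathcal E_\la)$, the variational characterisation of $\la_k,\la_{k+1}$ gives
\[
Q_\la(u,v)\le -\f12\Bigl(1-\f{\la_k}{\la}\Bigr)\|(u,v)\|_E^2\ \text{on }Y,\qquad Q_\la(u,v)\ge\f12\Bigl(1-\f{\la}{\la_{k+1}}\Bigr)\|(u,v)\|_E^2\ \text{on }Z,
\]
the first coefficient being negative and the second positive thanks to $\la\in(\la_k,\la_{k+1})$ and $\la_k\neq\la_{k+1}$. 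Combining these with hypothesis $(H_5)$ (which lets one absorb $\int F(x,u,v)\,dx$ by an arbitrarily small multiple of $|u|_2^2+|v|_2^2$ near the origin, plus a harmless $L^q$ tail controlled by $(H_3)$) and, on $Y$, using equivalence of all norms in finite dimension, I obtain $\rho>0$ such that $I_\la\le 0$ on $Y\cap\bar B_\rho$ and $I_\la\ge 0$ on $Z\cap\bar B_\rho$. This verifies the local linking geometry at $0$.

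The main obstacle, as usual for linking theorems without an Ambrosetti--Rabinowitz condition, is establishing a Cerami (or at least Palais--Smale) compactness condition for $I_\la$. Boundedness of Cerami sequences $(u_n,v_n)$ is where hypothesis $(H_7)$ enters: from $(1+\|(u_n,v_n)\|_E)\|I_\la'(u_n,v_n)\|\to 0$ one derives
\[
2I_\la(u_n,v_n)-\<I_\la'(u_n,v_n),(u_n,v_n)\>=\int_\Om\bigl(F_u u_n+F_v v_n-2F\bigr)\,dx+o(1),
\]
and $(H_7)$ provides a lower bound of this integral by a positive multiple of $|(|u_n|^2+|v_n|^2)^{p/2}|_1$ minus a constant; the condition $p\ge\f{2N}{N+2s}(q-1)$ (resp.\ $p>q-1$ for $N\le 2s$) is precisely what is needed so that this $L^p$-control, combined with the growth $(H_3)$ and interpolation/H\"older inequalities, dominates $\int|F_u u+F_v v|\,dx$ and yields $\|(u_n,v_n)\|_E$ bounded. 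Then a standard splitting argument using the compact embedding $X_0^s\hookrightarrow\hookrightarrow L^r(\Om)$ ($r<2^*_s$) together with the $(S_+)$ character of the leading term $(-\Delta)^s$ gives a strongly convergent subsequence, finishing the Cerami condition.

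With the local linking geometry and the Cerami condition in hand, the local linking theorem yields a nontrivial critical point $(u_\la,v_\la)\in E$ of $I_\la$, which is the desired nontrivial weak solution of $(\mathcal P_\la)$.
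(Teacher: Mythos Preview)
Your setup of the functional, the spectral decomposition $E=Y\oplus Z$, and the verification of the local linking geometry via $(H_3)$ and $(H_5)$ all match the paper's Lemma~5.2 and are fine. The compactness argument you sketch using $(H_7)$ is also essentially the paper's Lemma~5.4 (the paper phrases it as a $(PS)^*$ condition relative to a filtration of finite-dimensional subspaces rather than a Cerami condition, but the core estimate---controlling $\int(|u_n|^2+|v_n|^2)^{p/2}$ and then splitting into the $W_k$ and $W_k^\perp$ components---is the same).

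However, there is a genuine gap: you never invoke hypothesis $(H_6)$, and you omit the geometric condition that actually forces a \emph{nontrivial} critical point. Local linking at $0$ together with a Palais--Smale/Cerami condition is not sufficient on its own; the Li--Willem theorem the paper uses (Theorem~\ref{assthm.1}) additionally requires that $I_\la(u,v)\to -\infty$ as $\|(u,v)\|_E\to\infty$ along each finite-dimensional space $X^1_k\oplus X^2$. This anticoercivity is precisely what $(H_6)$ supplies: since $F(x,u,v)/(|u|^2+|v|^2)\to+\infty$, one gets $F(x,u,v)\ge T(|u|^2+|v|^2)-C_T$ for arbitrarily large $T$, and on a finite-dimensional subspace (where all norms are equivalent) this dominates the quadratic part of $I_\la$ and drives the functional to $-\infty$. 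This is the content of the paper's Lemma~5.5. Without it, the abstract theorem cannot be applied, and your final sentence ``the local linking theorem yields a nontrivial critical point'' is not justified. If you are thinking of a Brezis--Nirenberg variant that replaces anticoercivity by boundedness from below, note that $I_\la$ is \emph{not} bounded below here (again by $(H_6)$), so that route is also unavailable.

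In short: add a step establishing $I_\la\to-\infty$ on finite-dimensional subspaces using $(H_6)$, and make explicit which version of the local linking theorem you are invoking (the paper's choice is the Li--Willem theorem with the $(PS)^*$ condition, conditions (a)--(d) of Theorem~\ref{assthm.1}).
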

	
 We expound briefly the proof of Theorem \ref{thm-1}. To begin with, we will find the existence of a weak solution as a critical point of a suitable $C^1$ energy functional, defined on an appropriately chosen Banach space $X$. As a next step, we show that the associated energy functional has a local linking at zero with respect to a direct sum decomposition pair of Banach spaces of $X$. We further ensue that the functional satisfies (PS)$^*$ condition (see Definition \ref{pscondition}) and maps bounded sets into bounded sets; subsequently showing the corresponding functional is anticoercive (see Lemma \ref{coercive.1}). Our strategy in proving Theorem \ref{thm-1} relies on Theorem \ref{assthm.1}.

In the next harvest, we exhibit the existence of weak solution of the system $(\mathcal{P}_{\la)}$ for $\la=\la_1$ under some attributions together with the assumption that the set $\{ x: a(x) <\la_2-\la_1 \big\} $ has positive Lebesgue measure and $g \equiv 0, \{f, h \}\geq 1$ over ${\bar{\Om}}$.

\begin{theorem}\label{thm-2}
	Let us assume that Hypotheses $(H_4),\,\,(H_8)$ and $(H_9)$ be satisfied. Furthermore, assume that $ a(x) \leq \la_2-\la_1$ a.e. $x \in \Om$ with the condition that $\mathcal{L}^N \big( \big\{ x: a(x) <\la_2-\la_1 \big\}  \big)>0,$ where $\mathcal{L}^N$ denotes the $N$-dimensional Lebesgue measure. Then, the system $(\mathcal{P}_{\la_1)}$ has a weak solution, provided $g \equiv 0, \{f(x), h(x) \}\geq 1$, for all ${x \in \bar{\Om}}$.
\end{theorem}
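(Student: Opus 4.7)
The plan is to apply Rabinowitz's saddle point theorem to the $C^1$ energy functional associated to the resonant system $(\mathcal{P}_{\la_1})$. On the product fractional Sobolev space $X$ with exterior Dirichlet condition, set
\begin{equation*}
I_{\la_1}(u,v) = \tfrac{1}{2}\|(u,v)\|_X^2 - \tfrac{\la_1}{2}\int_\Om \big(f(x)u^2 + h(x)v^2\big)\,dx - \int_\Om F(x,u,v)\,dx,
\end{equation*}
noting that the cross term drops out since $g \equiv 0$. Under $(H_4)$ the functional is $C^1$ and its critical points are weak solutions of $(\mathcal{P}_{\la_1})$. Since $\la_1$ lies on the spectrum of $(\mathcal{E}_\la)$, the whole argument must leverage the non-uniform bound $a(x) \leq \la_2 - \la_1$ together with the strict inequality on the positive-measure set $\{a<\la_2-\la_1\}$.

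I would then split $X = E_1 \oplus E_1^{\perp}$, where $E_1$ is the finite-dimensional $\la_1$-eigenspace. On $E_1$ the quadratic form $\|(u,v)\|_X^2 - \la_1 \int(fu^2+hv^2)\,dx$ vanishes identically, so $I_{\la_1}|_{E_1} = -\int_\Om F(x,u,v)\,dx$; integrating the one-sided gradient bound from $(H_9)$ along rays yields $F(x,u,v) \geq -\eps(|u|^2+|v|^2) - C_\eps$, and combined with the equivalence of norms on the finite-dimensional $E_1$ this shows $I_{\la_1}$ is bounded above on $E_1$. On $E_1^{\perp}$ the spectral characterization gives $\la_2\int(fu^2+hv^2) \leq \|(u,v)\|_X^2$; integrating $(H_4)$ yields $F \leq \tfrac{1}{2}a(x)(|u|^2+|v|^2) + \text{l.o.t.}$, and using $f,h \geq 1$ together with the pointwise bound $a \leq \la_2-\la_1$ one sees the two leading quadratic contributions cancel in the borderline case. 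The strict inequality on the positive-measure set $\{a<\la_2-\la_1\}$ is then invoked through a contradiction/compactness argument to upgrade this borderline balance to genuine coercivity of $I_{\la_1}$ on $E_1^{\perp}$.

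The main obstacle will be the Palais--Smale condition, precisely because $\la_1$ is an eigenvalue. Given a (PS)-sequence $(u_n,v_n)$ with putative $\|(u_n,v_n)\|_X \to \infty$, I would rescale $w_n = (u_n,v_n)/\|(u_n,v_n)\|_X$, extract subsequential weak limits $w_0$ in $X$ and strong limits in $L^2(\Om)^2$, and divide the relation $I_{\la_1}'(u_n,v_n) \to 0$ by $\|(u_n,v_n)\|_X$. Hypothesis $(H_4)$ ensures the normalized nonlinear term converges to an expression of the form $b(x)w_0$ with $|b(x)| \leq a(x)$ a.e., while $(H_8)$ and $(H_9)$ control the remainder terms and certify that the identity persists to the limit as a linear equation
\begin{equation*}
(-\De)^s w_0 = \la_1\,\mathrm{diag}(f,h)\,w_0 + b(x) w_0 \quad \text{in } \Om,
\end{equation*}
coupled with $w_0 \equiv 0$ on $\Rn \setminus \Om$. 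Comparing with the variational characterization of $\la_2$ and using the strict subcriticality on $\{a<\la_2-\la_1\}$ rules out a nontrivial $w_0$; strong $L^2$-convergence of $w_n$ to zero then upgrades via the equation to strong $X$-convergence to zero, contradicting $\|w_n\|_X = 1$.

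With the Palais--Smale condition and the saddle point geometry both established, Rabinowitz's saddle point theorem delivers a critical point of $I_{\la_1}$, which is the sought weak solution of $(\mathcal{P}_{\la_1})$. The most delicate step, and in my view the principal obstacle, is the identification of the linear limiting equation in the (PS) analysis with sufficient precision to cleanly exploit the strict subcriticality hypothesis $\mathcal{L}^N(\{a<\la_2-\la_1\})>0$; the same ingredient is what allows the borderline quadratic balance on $E_1^{\perp}$ to be promoted to coercivity.
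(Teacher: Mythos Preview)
Your overall architecture matches the paper's---split $X=W_1\oplus W_1^\perp$ along the $\la_1$-eigenspace, show coercivity on $W_1^\perp$ by exploiting the positive-measure set $\{a<\la_2-\la_1\}$ through a contradiction/compactness argument (this is essentially the paper's Lemma~\ref{coercive.2}), verify a compactness condition, and invoke a saddle-point theorem. But two of your steps do not go through as written.

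The geometry on $E_1$ is the first gap. Integrating $(H_9)$ along rays gives only $F(x,u,v)\geq -\eps(|u|^2+|v|^2)-C_\eps$, hence $I_{\la_1}|_{E_1}=-\int_\Om F\leq \eps|(u,v)|_2^2+C_\eps|\Om|$; on the finite-dimensional $E_1$ this grows like $\eps\|(u,v)\|_X^2$ and is \emph{not} bounded above. Even if it were, boundedness from above would be insufficient for Rabinowitz's linking, which requires $I_{\la_1}|_{\partial D\cap E_1}\leq\inf_{E_1^\perp}I_{\la_1}$ for some disk $D\subset E_1$. The paper obtains genuine anti-coercivity on $W_1$ (Lemma~\ref{anti_coercive}) by coupling $(H_9)$, which yields $\liminf_{l\to\infty}F(x,lu_0,lv_0)/l^2\geq 0$, with $(H_8)$, which forces $t\mapsto F(x,tu_0,tv_0)/t^2$ to be increasing past a threshold, and concludes $F(x,ru_0,rv_0)\to+\infty$ on every ray, hence $J_{\la_1}\to-\infty$ on $W_1$. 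The compactness step is the second gap: $(H_4)$ is only a growth bound, not asymptotic linearity, so the normalized term $(F_u,F_v)(\cdot,u_n,v_n)/\|(u_n,v_n)\|_X$ has no reason to converge to an expression of the form $b(x)w_0$; the limiting linear equation you describe is not derivable from the stated hypotheses. The paper avoids this entirely: it verifies the Cerami condition (and applies Silva's saddle-point theorem, the paper's Theorem~\ref{assthm.2}, rather than Rabinowitz's), using $(H_8)$ via the identity $\langle J'_{\la_1}(u_n,v_n),(u_n,v_n)\rangle-2J_{\la_1}(u_n,v_n)=\int_\Om(F_u u_n+F_v v_n-2F)\,dx$ to force $\int_\Om(|u_n|^2+|v_n|^2)^{\ga/2}\,dx/\|(u_n,v_n)\|_X\to 0$; testing $J'_{\la_1}$ against the $W_1^\perp$-component and combining $(H_4)$ with H\"older's inequality then shows that component is asymptotically negligible, so the rescaled limit lands in $W_1$, and the contradiction comes from re-examining the energy identity---no limiting PDE is ever written down.
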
	
Our approach in proving Theorem \ref{thm-2} is based on Theorem \ref{assthm.2}.
 In the primitive step, we rewrite the functional as sum of two functionals, first one of which is linear and involves bounded, self-adjoint operators, subsequently decomposing the space $X$ as an orthogonal pair of subspaces, resulting that the given functional is coercive in one subspace of these pairs (see Lemma \ref{coercive.2}) and anticoercive in another (see Lemma \ref{anti_coercive}), proximating that the correlated energy functional satisfies Cerami condition (see Definition \ref{Cb}).
	
		\textbf{Organisation of the paper}.
		The present paper is organized as follows. In Section \ref{prel}, we recall some preliminary results, covering the function spaces. We mention a few abstract critical point theorems  in Section \ref{abstract}, vital to prove our main results. Section \ref{weak-sol} consists of introducing the variational method to establish existence of weak solution.  We provide the proof of Theorem \ref{thm-1} in Section \ref{Thm-1} and the proof of Theorem \ref{thm-2} in Section \ref{Thm-2}.

\section{Preliminaries}\label{prel}
In this section we define appropriate function spaces which are required for our analysis. Let $ s\in(0,1),\, N>2s,\, 2_s^*:=\frac{2N}{N-2s}.$ We denote the standard fractional Sobolev space by $H^s(\Omega)=W^{s,2}(\Omega)$ endowed with the norm
$$
\|{u}\|_{H^s(\Om)}:=\|{u}\|_{L^2(\Om)}+\left(\int_{\Om\times\Om} \frac{|u(x)-u(y)|^2}{|x-y|^{N+2s}}dxdy\right)^{1/2}.
$$
We set $Q:=\R^{2N}\setminus (\Om^c \times \Om^c)$, where $\Om^c=\Rn \setminus \Om$ and define $$
X_s(\Om):=\Big\{u:\mathbb{R}^N\to\mathbb{R}\mbox{ measurable }\Big|u|_{\Omega}\in L^2(\Omega)\mbox{ and }
\int_{Q} \frac{|u(x)-u(y)|^2}{|x-y|^{N+2s}}dxdy<\infty\Big\}.
$$
The space $X_s(\Om)$ is endowed with the norm defined as
$$\|u\|_s:=|u|_2+\left(\int_{Q} \frac{|u(x)-u(y)|^2}{|x-y|^{N+2s}}dxdy\right)^{1/2},$$
where $|u|_2:=\left(\int_\Om |u(x)|^2 dx\right)^\frac{1}{2}$. We note that in general $H^s(\Om)$ is not same as $X_s(\Om)$ as $\Om\times\Om$ is strictly contained in $Q$.
We define the space $X_{0,s}(\Om)$ as
$$X_{0,s}(\Om) :=\Big\{u \in X_s(\Om) : u=0 \quad\text{a.e. in}\quad \Rn \setminus \Om\Big\} $$
or equivalently
as $\overline{C_0^\infty(\Om)}^{X_s(\Om)}$. It is well-known that for $X_{0,s}(\Om)$ is a Hilbert space endowed with the inner product
$$\langle u, v\rangle_{0,s}=\int_{Q} \frac{(u(x)-u(y))(v(x)-v(y))}{|x-y|^{N+2s}}\,dxdy,$$	
and the corresponding norm is given by
$$\|u\|_{0,s}=\left(\int_{Q} \frac{|u(x)-u(y)|^2}{|x-y|^{N+2s}}dxdy\right)^{1/2}.$$
Since $u=0$ in $\Rn\setminus\Om,$ the above integral can be extended to all of $\mathbb{R}^N.$ The embedding
$X_{0,s}(\Om)\hookrightarrow L^r(\Om)$ is continuous for any $r\in[1,2^*_s]$ and compact for $r\in[1,2^*_s).$ 


\section{Abstract Results}\label{abstract}
In this section, we recollect a few abstract critical point theorems. The resulting concepts are 
borrowed from \cite{Liwilem} and \cite{Silva-95}. 
Let $X$ be a real Banach space having a direct sum decomposition $X=X^1 \oplus X^2$. We deal with two sequences of subspaces 
$$ X_0^1 \subset X_1^1 \subset \cdots X^1,\, X_0^2 \subset X_1^2 \subset \cdots \subset X^2
$$
including $X^j=\overline{\cup_{n \in \N}}X_n^j$, $j=1,2$ and $\text{dim }(X_n^j)<\infty$, $j =1,2, n \in \N$.
We indicate $X_\al$ as $X_{\al_1} \oplus X_{\al_2}$
for every miulti-index $\al=(\al_1,\al_2) \in \N \times \N$. We revive that 
$$ \al \leq \ba \Leftrightarrow \al_1 \leq \ba_1,\text{ and } \al_2 \leq \ba_2.
$$
A sequence $\{\al_n \} \subset \N \times \N$ is admissible if, for every $\al \in \N \times \N$, there exists $n_0 \in \N$ such that $ n \geq n_0 \Rightarrow \al_n \geq \al.$ We announce by $f_\al$, $f$ restricted to $X_\al$ for every $f: X \to \R$.

We state now some essential definitions and theorems which are pivotal tools to obtain our main results.
\begin{definition}[$(PS)^*$ condition \cite{Liwilem}]\label{pscondition}
A functional $J\in C^1(X, \mathbb{R})$ is said to satisfy the $(PS)^*$ condition if every sequence $\{u_{\beta_n}\}\subset X$ with $\{\beta_n\}$ is admissible, $u_{\beta_n}\in X_{\beta_n}$ and
$\sup_n J(u_{\beta_n} ) < \infty$ alongside $J'_{\beta_n}( u_{\beta_n} )\to0$ possesses a convergent subsequence which converges to a critical point of $J.$
\end{definition}
\begin{definition}[Local linking \cite{Liwilem}]\label{loclink}
 Let $X$ be a Banach space with a direct sum decomposition $X = X_1\oplus X_2 .$ The functional $J\in C^1(X, \mathbb{R} )$
has a local linking at zero with respect to $( X_1,X_2)$ if there is $t > 0$ such that
\begin{align}\label{geqcon}
 J( u ) \geq 0,\,\quad \mbox{for all}\,\,u \in X_1\quad \mbox{with}\quad \|u\| \leq  t
\end{align}
and
\begin{align}\label{leqcon}
 J ( u ) \leq 0,\,\quad \mbox{for all}\,\,u \in X_2\quad \mbox{with}\quad \|u\| \leq  t.
\end{align}
\end{definition}
\begin{definition}[Cerami $b$ or $\,(C_b)$ condition]\label{Cb}
 Let $b\in\mathbb{R}$ and $J\in C^1(X,\mathbb{R})$ satisfies the $C_b$ condition, if for every sequence 
 $\{u_n\}\subset X$ with $J(u_n)\to b$ and $(1+\|u_n\|)J'(u_n)\to 0,$ as $n\to\infty,$
 possesses a convergent subsequence.
\end{definition}
\begin{theorem}[Theorem 1, \cite{Liwilem}]\label{assthm.1}
Let $X$ be a Banach space with a direct sum decomposition $X = X_1\oplus X_2.$ Assume that $J\in C^1 ( X , \mathbb{R} )$
satisfies the following conditions:
 \begin{itemize}
  \item [(a)]
  $J$ has a local linking at zero with respect to $( X_1 , X_2 ) ;$
  \item [(b)]
  $J$ satisfies the $(PS)^*$ condition;
  \item[(c)]
  $J$ maps bounded sets into bounded sets;
  \item[(d)]
  for every $k \in \mathbb{N},\,\, J ( u )\to-\infty$ as $u \in X_{1,k} \oplus X_2$ and $\| u\|\to\infty.$
 \end{itemize}
Then, $J$ has a nontrivial critical point.
\end{theorem}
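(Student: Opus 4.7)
The plan is to adapt the Galerkin-type strategy of Li--Willem: construct critical points of the restricted functionals $J_\alpha$ on the finite-dimensional pieces $X_\alpha = X_{\alpha_1} \oplus X_{\alpha_2}$ via a finite-dimensional linking argument, then pass to the limit along an admissible sequence $\{\alpha_n\}$ using the $(PS)^*$ condition. The local linking at zero (hypothesis (a)) together with the anticoercivity (d) will supply the minimax geometry on each $X_\alpha$, while (c) will ensure the minimax levels are uniformly bounded from above.

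First I would fix a multi-index $\alpha = (\alpha_1, \alpha_2)$ and let $t > 0$ be the radius given by Definition \ref{loclink}. On $X_\alpha$, which is finite-dimensional, the splitting $X_\alpha = X_{\alpha_1} \oplus X_{\alpha_2}$ together with (\ref{geqcon})--(\ref{leqcon}) yields a linking pair: $J \geq 0$ on $\{u \in X_{\alpha_1} : \|u\| \leq t\}$ and $J \leq 0$ on $\{u \in X_{\alpha_2} : \|u\| \leq t\}$. Condition (d) applied with $k = \alpha_1$ forces $J(u) \to -\infty$ on $X_{1,\alpha_1} \oplus X^2 \supset X_\alpha$ as $\|u\| \to \infty$, so I can close the linking set off with a large sphere of radius $R_\alpha$ inside $X_{\alpha_2} \oplus (X_{\alpha_1} \cap \{ \text{radial slice}\})$ on which $J$ is negative. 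A standard finite-dimensional deformation/linking lemma then produces a critical point $u_\alpha \in X_\alpha$ of $J_\alpha$ with
\[
0 < c \leq J_\alpha(u_\alpha) \leq M,
\]
where $c$ depends only on the local linking datum (hence is independent of $\alpha$) and $M$ is a uniform upper bound coming from the constructed linking class together with hypothesis (c).

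Next I would choose an admissible sequence $\{\alpha_n\}$ and obtain a corresponding sequence $u_{\alpha_n} \in X_{\alpha_n}$ with $J'_{\alpha_n}(u_{\alpha_n}) = 0$ and $\sup_n J(u_{\alpha_n}) \leq M < \infty$. Definition \ref{pscondition} then gives a subsequence converging to a critical point $u$ of $J$. The uniform lower bound $J(u_{\alpha_n}) \geq c > 0$ passes to the limit and shows $J(u) \geq c > 0 = J(0)$, which forces $u \neq 0$.

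The principal obstacle is the construction of the bounded linking class on $X_\alpha$ with critical levels simultaneously bounded below by a positive constant $c > 0$ (from the local linking) and above by a constant $M$ independent of $\alpha$. The upper bound is delicate because the anticoercivity in (d) is only along subspaces of the form $X_{1,k} \oplus X^2$ for fixed $k$, whereas we need control as $\alpha_1, \alpha_2 \to \infty$; the remedy is to build the ``upper half'' of the linking set inside $X_{\alpha_2} \oplus (\text{fixed ray in } X_{\alpha_1})$, reducing to (d) with a fixed $k$. A secondary technical point is that the critical points $u_{\alpha_n}$ satisfy $J'_{\alpha_n}(u_{\alpha_n}) = 0$ rather than $J'(u_{\alpha_n}) = 0$, but this is precisely what the $(PS)^*$ condition is designed to handle, and hypothesis (c) guarantees that the full derivative $J'(u_{\alpha_n})$ remains bounded so the limit extraction is legitimate.
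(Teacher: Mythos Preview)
The paper does not prove this statement at all: Theorem~\ref{assthm.1} is quoted from \cite{Liwilem} as a black-box abstract critical point theorem, and the paper simply \emph{applies} it (via Lemmas~\ref{c1}--\ref{coercive.1}) to the functional $J_\lambda$. There is therefore no ``paper's own proof'' to compare your proposal against.

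That said, your outline is the right Galerkin/Li--Willem strategy, but it contains a genuine gap in the nontriviality step. You assert that the finite-dimensional minimax levels satisfy $J_\alpha(u_\alpha)\geq c>0$ with $c$ depending only on the local-linking radius $t$. The local-linking hypothesis \eqref{geqcon}--\eqref{leqcon} only gives $J\geq 0$ on $\{u\in X_1:\|u\|\leq t\}$ and $J\leq 0$ on $\{u\in X_2:\|u\|\leq t\}$; it does \emph{not} provide a strictly positive lower bound on the sphere in $X_1$, so the linking level is a priori only $\geq 0$. If the limit critical value turns out to be $0=J(0)$, your argument ``$J(u)\geq c>0$ forces $u\neq 0$'' collapses. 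In Li--Willem the nontriviality is obtained differently: one argues by contradiction, assuming $0$ is the \emph{only} critical point, and then uses a pseudo-gradient deformation compatible with the local-linking geometry to destroy the linking---this is where conditions (c) and (d) and the $(PS)^*$ condition are actually used in concert. Your sketch also leaves the uniform upper bound $M$ somewhat vague: the linking class must be built so that its ``cap'' lies in a set where (d) gives control with a \emph{fixed} $k$, and your ``fixed ray in $X_{\alpha_1}$'' remedy needs to be made precise (one typically takes a cone over the $X_2$-ball with apex along a single direction in $X_1^1$).
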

\begin{theorem}[Theorem 1.1, \cite{Silva-95}]\label{assthm.2}
Let $X$ be a Hilbert space with a orthogonal direct sum decomposition $X = X_1\oplus X_2.$ Assume that $J\in C^1 ( X , \mathbb{R} )$
satisfies the following conditions:
 \begin{itemize}
  \item [(a)]
  $J$ can be written as
  $$
  J(u)=\frac{1}{2}\left\langle S(u),u\right\rangle+ K(u),\quad u\in X,
  $$
  with $S(u)=S_1\Pi_1(u)+S_2\Pi_2(u),\,\,u\in X,$ where $S_i:X_i\to X_i$ are bounded self-adjoint operator and $\Pi_i:X\to X_i$ are orthogonal projections for $i=1,2$ and $K'$ is compact;
  \item [(b)]
  there is $t_1\in\mathbb{R}$ such that
\begin{align}\label{leqcon.1}
 J ( u ) \leq t_1,\,\quad \mbox{for all}\,\,u \in X_1;
\end{align}
  \item[(c)]
  there is $t_2 \leq t_1$ such that
\begin{align}\label{geqcon.1}
 J ( u ) \geq t_2,\,\quad \mbox{for all}\,\,u \in X_2;
\end{align}
  \item[(d)]
  $J$ satisfies $C_b$ conditions.
 \end{itemize}
Then, $J$ has a critical value $t_0\in [t_2,t_1]$.
\end{theorem}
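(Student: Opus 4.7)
The plan is to apply the abstract critical point Theorem \ref{assthm.2} to the natural energy functional associated with $(\mathcal{P}_{\la_1})$. Work on the product Hilbert space $X:=X_{0,s}(\Om)\times X_{0,s}(\Om)$ and set
\[
J(u,v):=\tfrac12\|(u,v)\|_X^2-\tfrac{\la_1}{2}\int_\Om\bigl(f(x)u^2+h(x)v^2\bigr)dx-\int_\Om F(x,u,v)\,dx,
\]
whose critical points are precisely the weak solutions of $(\mathcal{P}_{\la_1})$ (the cross term disappears since $g\equiv 0$). Because $g\equiv 0$ and $f,h\ge 1$, the eigenvalue problem $(\mathcal{E}_{\la_1})$ decouples into two independent weighted eigenvalue problems $(-\De)^su=\la fu$ and $(-\De)^sv=\la hv$; $\la_1$ equals the minimum of their first eigenvalues and the associated joint eigenspace $V\subset X$ is finite-dimensional. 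Choose $X_1:=V$ and $X_2:=V^\perp$ orthogonally in $X$.

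For condition (a) of Theorem \ref{assthm.2}, define the bounded self-adjoint operator $S:X\to X$ via $\langle S(u,v),(w,z)\rangle_X:=\langle(u,v),(w,z)\rangle_X-\la_1\int(fuw+hvz)\,dx$ and the nonlinear term $K(u,v):=-\int F(x,u,v)\,dx$, so that $J=\tfrac12\langle S(\cdot),\cdot\rangle+K$. The subspaces $X_1,X_2$ are $S$-invariant, $S$ vanishes on $X_1$, and $\langle S(u,v),(u,v)\rangle\ge(1-\la_1/\la_2)\|(u,v)\|_X^2$ on $X_2$ by the variational characterization of $\la_2$ together with the orthogonality $V\perp X_2$; hence $S=S_1\Pi_1+S_2\Pi_2$ has the required block form, while $K'$ is compact by the growth estimate extracted from $(H_4)$ combined with the compact embedding $X_{0,s}(\Om)\hookrightarrow L^2(\Om)$. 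For (b), the quadratic part vanishes on the finite-dimensional $X_1$, so $J|_{X_1}=-\int F$; integrating the identity $\tfrac{d}{dt}\bigl(F(x,tu,tv)/t^2\bigr)=t^{-3}(F_uu+F_vv-2F)(x,tu,tv)$ and using $(H_8)$ produces a subquadratic upper bound on $F$ which yields $J|_{X_1}\le t_1$ for some $t_1\in\R$. Condition (c) follows by proving coercivity of $J$ on $X_2$ (Lemma \ref{coercive.2}): from $(H_4)$ one extracts $|F(x,u,v)|\le\tfrac12(a(x)+\eps)(u^2+v^2)+C_\eps$, whence
\[
J(u,v)\ge\tfrac12\bigl(1-\tfrac{\la_1}{\la_2}\bigr)\|(u,v)\|_X^2-\tfrac12\int a(x)(u^2+v^2)\,dx-\eps\|(u,v)\|_{L^2}^2-C_\eps|\Om|,
\]
and a normalization/weak-limit argument exploiting the strict inequality $a<\la_2-\la_1$ on a set of positive measure rules out the degenerate minimizing limit (which would have to be a $\la_1$-eigenfunction lying in $X_2\cap X_1=\{0\}$) and yields coercivity.

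The principal obstacle is the Cerami condition (d). Given a Cerami sequence $\{(u_n,v_n)\}\subset X$ with $J(u_n,v_n)\to b$ and $(1+\|(u_n,v_n)\|_X)J'(u_n,v_n)\to 0$, decompose $(u_n,v_n)=(u_n^1,v_n^1)+(u_n^2,v_n^2)\in X_1\oplus X_2$. Testing $J'(u_n,v_n)$ against $(u_n^2,v_n^2)$ and invoking the coercivity from step (c) bounds the $X_2$-component. For the $X_1$-component, I use the Cerami identity $J(u_n,v_n)-\tfrac12\langle J'(u_n,v_n),(u_n,v_n)\rangle=-\int(F_uu_n+F_vv_n-2F)\,dx$ combined with $(H_9)$, which controls $F_uu+F_vv$ from below; if $\|(u_n^1,v_n^1)\|_X\to\infty$, renormalizing in the finite-dimensional $X_1$ produces a limiting $\la_1$-eigenfunction against which the strict measure inequality on $\{a<\la_2-\la_1\}$ yields a contradiction. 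Once boundedness is in hand, compactness of $K'$ together with the Fredholm-type splitting of $S$ delivers a convergent subsequence. This delicate resonance analysis at $\la_1$ is the heart of the proof.
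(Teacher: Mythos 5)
Your proposal does not prove the statement in question. Theorem \ref{assthm.2} is an \emph{abstract} critical point theorem about a functional $J\in C^1(X,\R)$ on an arbitrary Hilbert space $X$ with an orthogonal decomposition $X=X_1\oplus X_2$; its hypotheses (a)--(d) are assumptions, not things to be verified. What you have written instead is a sketch of how to \emph{apply} Theorem \ref{assthm.2} to the concrete functional $J_{\la_1}$ associated with $(\mathcal{P}_{\la_1})$ — i.e.\ a proof of Theorem \ref{thm-2} (and indeed it tracks Lemmas \ref{compact}, \ref{cerami}, \ref{anti_coercive}, \ref{coercive.2} of the paper fairly closely). The hypotheses $(H_4)$, $(H_8)$, $(H_9)$, the operator $L$, the weight $a$, and the eigenvalues $\la_1,\la_2$ that you invoke have no meaning in the abstract setting of Theorem \ref{assthm.2}; they belong to the application, not to the theorem.

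A proof of Theorem \ref{assthm.2} itself would have to proceed at the level of generality of the statement: use condition (a) to write $J'=S+K'$ as a bounded self-adjoint operator plus a compact map, so that bounded Cerami sequences have convergent subsequences; use conditions (b) and (c) to build an (infinite-dimensional) linking between $X_1$ and $X_2$ at levels between $t_2$ and $t_1$; define the minimax value $t_0=\inf_{\gamma}\sup_{u\in X_1}J(\gamma(u))$ over a suitable class of deformations fixing the boundary behaviour; and invoke a quantitative deformation lemma valid under the Cerami condition $(C_b)$ to show that $t_0\in[t_2,t_1]$ must be a critical value, for otherwise the sublevel set could be deformed below $t_2$ contradicting (c). This is precisely the content of Silva's Theorem 1.1, and it is what the paper is citing rather than proving. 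Note also that the paper itself offers no proof of \ref{assthm.2}, so there is nothing in the paper to which your proposal could be compared as a proof of this particular statement; your argument is aimed at the wrong target.
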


\section{Existence of weak solution}\label{weak-sol}
With the objective to obtain existence of weak solutions on relevant function space, we first consider the Hilbert space $X=X_{0,s}(\Om) \times X_{0,s}(\Om)$ with the norm
\begin{equation}\label{eq:norm}
\|(u,v)\|_X=\big( \|u\|_{0,s}^2+\|v\|_{0,s}^2 \big)^{\frac{1}{2}}, (u,v) \in X,
\end{equation}
and the corresponding inner product is denoted and defined by
\begin{gather*}
\< (u,v), (w,z)  \>_X =\<u,w\>_{0,s}+\<v,z\>_{0,s}\, (u,v),(w,z) \in X.
\end{gather*}	
The well-known fractional Sobolev embedding continues to hold for the space $X$, that is,
\begin{gather}\label{Sob-embed}
X \hookrightarrow L^r(\Om) \times L^r(\Om) \, \text{continuously for}\, 1 \leq r \leq 2^*_s,
\end{gather}	
and fractional compact embedding
\begin{gather}\label{Cpt-embed}
	X \hookrightarrow L^r(\Om) \times L^r(\Om) \, \text{compactly for}\, 1 \leq r < 2^*_s.
\end{gather}	
We recall that (\ref{Sob-embed}) is not true for $r=\infty$, that is, for $N=2s$. Let us denote the dual space of $X$ by $X'$. We consider the operator $L: X \to X'$ given by
\begin{equation}\label{eq:L}
\left\langle L(u,v),(w,z)\right\rangle=\Iom \big[ \big(f(x)u+g(x)v \big) w+\big( g(x)u+h(x)v \big)z  \big]\, dx, \, \text{for all}\, (u,v), (w,z) \in X.
\end{equation}
We certify that $L$ is well-defined, bounded linear operator. Indeed, by using H\"older's inequality and the fact that $f,g,h \in C(\bar{\Om})$, it follows,
\begin{equation*}
\begin{aligned}
\big| \<L(u,v),(w,z) \>\big|& \leq 2 \big( |f|_{\infty}+|g|_{\infty}+|h|_{\infty} \big) \big( |u|_2|w|_2+|v|_2|w|_2+|u|_2|z|_2+|v|_2|z|_2   \big)\\
&\leq 2C^2 \big( |f|_{\infty}+|g|_{\infty}+|h|_{\infty} \big)  \big( \|u\|_{0,s}+\|v\|_{0,s} \big)\big( \|w\|_{0,s}+\|z\|_{0,s}\big)\\
&\leq 2C^2 \big( |f|_{\infty}+|g|_{\infty}+|h|_{\infty} \big) \| (u,v) \|_{X} \|(w,z)\|_{X}
\\	
\end{aligned}	
\end{equation*}
for $(u,v), (w,z) \in X$. This implies,
\begin{gather*}
	\| L(u,v)\|_{X'} \leq 2C^2 \big( |f|_{\infty}+|g|_{\infty}+|h|_{\infty} \big)  \|(u,v)\|_X.
\end{gather*}	
This yields, $L$ is well-defined, bounded linear operator, making out that $L$ is symmetric. By using hypothesis $(H_1)-(H_2)$ and compactness of the embedding that $X \hookrightarrow L^2(\Om) \times L^2(\Om)$, we infer that $L$ is compact linear operator. Hence, the eigenvalue problem $(\mathcal{E}_\lambda)$ can be cast as:
\begin{gather}\label{eq:L-}
\big( (-\De)^s u, (-\De)^s v \big)=\la L(u,v),\, (u,v) \in X.	
\end{gather}	
Let us denote $\mathcal{D}=\begin{bmatrix}
(-\De)^s & 0\\
0 & (-\De)^s
\end{bmatrix}$.
Consequently, there exists a sequence of eigenvalues $0<\la_1<\la_2 \leq \la_3 \leq \cdots \leq \la_k \leq \cdots$
for the eigenvalue problem (\ref{eq:L-}) such that $\lim_{k \to \infty}\la_k=\infty$ and $\la_1$ is simple, positive and isolated in the spectrum $\mathcal{D}$
in $X$ and can be characterized by
\begin{gather}\label{eq:la_1}
	\frac{1}{\la_1}=\sup \big\{ \<L(u,v), (u,v)\> : \|(u,v)\|_X=1 \big\}.
\end{gather}	
Let $(\phi_1,\psi_1)$ be the normalized equation function corresponding to $\la_1$ such that $\phi_1>0$ and $\psi_1>0$ in $\Om$. For $k \in \N$, let us denote the eigenspace corresponding to the eigenvalue $\la_k$ by $V_k$ and
$W_k:=\oplus_{i=1}^k V_i$ and monitor using Raleigh quotient, the following variational inequality
\begin{equation}\label{eq:W_k}
\| (u,v) \|_X^2 \leq \la_k \< L(u,v), (u,v)\> \,\text{for all}\, (u,v) \in W_k
\end{equation}
and
\begin{equation}\label{eq:W_k-}
\| (u,v) \|_X^2 \geq \la_{k+1} \< L(u,v), (u,v)\> \,\text{for all}\, (u,v) \in W_k^{\perp},
\end{equation}
holds.
\section{Proof of Theorem \ref*{thm-1}}\label{Thm-1}
For $\la \in (\la_k,\la_{k+1})$,
let us denote the energy functional corresponding to the problem $(\mathcal{P}_\la)$ as:
$J_\la: X \to \R$ by
\begin{equation}\label{eq:J}
J_\la(u,v)=\frac{1}{2}\| (u,v)\|_X^2-\frac{\la}{2}\<L(u,v), (u,v)\>-\Iom F(x,u,v)\,dx \,\text{for all}\, (u,v) \in X,
\end{equation}
where $L$ is defined in (\ref{eq:L}). In order to bring out the proof of Theorem \ref{thm-1}, we need  the following Lemmas.
\begin{lemma}\label{c1}
Let $J_\lambda$ be defined above in \eqref{eq:J}. Then,
 $J_\la \in C^1(X;\R)$.
\end{lemma}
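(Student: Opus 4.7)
The plan is to decompose $J_\la=J_1+J_2+J_3$ with $J_1(u,v)=\frac{1}{2}\|(u,v)\|_X^2$, $J_2(u,v)=-\frac{\la}{2}\<L(u,v),(u,v)\>$, and $J_3(u,v)=-\Iom F(x,u,v)\,dx$, and establish the $C^1$ regularity of each summand separately. The first piece is the squared Hilbert norm on $X$, hence smooth with Fréchet derivative $(w,z)\mapsto\<(u,v),(w,z)\>_X$. For $J_2$, since $L\colon X\to X'$ has already been shown in Section \ref{weak-sol} to be bounded, linear and symmetric, the associated quadratic form is $C^1$ with Fréchet derivative $(w,z)\mapsto -\la\<L(u,v),(w,z)\>$; its continuity in $(u,v)$ is immediate from the operator bound $\|L(u,v)\|_{X'}\leq 2C^2(|f|_\infty+|g|_\infty+|h|_\infty)\|(u,v)\|_X$ established just above (\ref{eq:L-}).

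The substantive work lies with the Nemytskii-type functional $J_3$, which I would treat in two steps. For Gateaux differentiability, I would fix $(u,v),(w,z)\in X$ and write
\[
\frac{J_3(u+tw,v+tz)-J_3(u,v)}{t}=-\Iom \frac{F(x,u+tw,v+tz)-F(x,u,v)}{t}\,dx.
\]
Since $F\in C^1(\bar\Om\times\R^2)$, the mean value theorem rewrites the integrand, for some $\theta=\theta(x,t)\in(0,1)$, as $-[F_u(x,u+\theta tw,v+\theta tz)w+F_v(x,u+\theta tw,v+\theta tz)z]$. By hypothesis $(H_3)$ this is dominated, for $|t|\leq 1$, by
\[
C_1\bigl(1+(|u|+|w|)^{q-1}+(|v|+|z|)^{q-1}\bigr)(|w|+|z|),
\]
which is integrable on $\Om$ by Hölder's inequality because the continuous Sobolev embedding (\ref{Sob-embed}) places $u,v,w,z$ in $L^q(\Om)$, thanks to $2<q<2_s^*$. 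The dominated convergence theorem then produces
\[
\<J_3'(u,v),(w,z)\>=-\Iom\bigl[F_u(x,u,v)w+F_v(x,u,v)z\bigr]\,dx.
\]

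To upgrade Gateaux differentiability to $C^1$, I need the map $(u,v)\mapsto J_3'(u,v)$ to be continuous from $X$ into $X'$. Given $(u_n,v_n)\to(u,v)$ in $X$, the continuous embedding (\ref{Sob-embed}) transfers this to convergence in $L^q(\Om)\times L^q(\Om)$, and hypothesis $(H_3)$ makes the Nemytskii operators associated to $F_u$ and $F_v$ continuous from $L^q(\Om)\times L^q(\Om)$ into $L^{q'}(\Om)$ with $q'=q/(q-1)$; this is the classical argument passing to an a.e.-convergent subsequence with an $L^q$-dominating function, using the continuity of $F_u,F_v$ in the last two variables, and recovering convergence of the whole sequence by uniqueness of the limit. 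Hölder's inequality followed once more by the embedding $X\hookrightarrow L^q(\Om)\times L^q(\Om)$ then yields
\[
\sup_{\|(w,z)\|_X\leq 1}\bigl|\<J_3'(u_n,v_n)-J_3'(u,v),(w,z)\>\bigr|\longrightarrow 0,
\]
finishing the proof. The main obstacle is this last continuity step for the Nemytskii operators: one must exploit the strict subcriticality $q-1<2_s^*-1$ in $(H_3)$ so that the conjugate exponent $q'$ of the test functions lands inside the range for which the Sobolev embedding gives strong convergence, and this is the only juncture at which $(H_3)$ is genuinely needed.
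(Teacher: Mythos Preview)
Your proposal is correct and follows essentially the same approach as the paper: the paper's proof is a terse sketch invoking the dominated convergence theorem and differentiation under the integral for the $F$-term, together with the linearity of $L$, whereas you have simply supplied the full details (the mean-value bound via $(H_3)$, the Sobolev embedding into $L^q$, and the Nemytskii continuity argument) that the paper leaves implicit. There is no substantive difference in strategy.
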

\begin{proof}
In view of the fact that $F$ satisfies \textit{$(H_3)$}, using Dominated Convergence Theorem and technique of differentiation under integration, one can show that the functional 
$(u,v) \mapsto \Iom F(x,u(x),v(x)) \, dx$ is continuously differentiable. As $L$ is linear and $f,g,h \in C(\bar{\Om})$, 
$$ (u,v) \mapsto \<L(u,v), (u,v)\>=\Iom  \big(( f(x)u+g(x)v)u+ (g(x)u+h(x)v)v)) \big)\, dx
$$
is also continuously differentiable. Hence, $J_\la$ is continuously differentiable  and
\begin{equation}\label{eq:J'}
\begin{aligned}
&\<J'_\la(u,v), (\phi,\psi)\>\\
&=\<u,\phi\>_{0,s}+\<v,\psi\>_{0,s}-\la \Iom  \big(( f(x)u+g(x)v)\phi+ (g(x)u+h(x)v)\psi)) \big)\, dx\\
&\quad-\Iom \big(F_u(x,u,v)\phi+F(x,u,v)\psi \big)\,dx\\
&=\int_{\R^{2N}} \frac{(u(x)-u(y)) (\phi(x)-\phi(y))}{|x-y|^{N+2s}}\, dxdy+\int_{\R^{2N}} \frac{(v(x)-v(y)) (\psi(x)-\psi(y))}{|x-y|^{N+2s}}\, dxdy\\
 &\quad-\la \Iom  \big(( f(x)u+g(x)v)\phi+ (g(x)u+h(x)v)\psi)) \big)\, dx
 -\Iom \big(F_u(x,u,v)\phi+F(x,u,v)\psi \big)\,dx,
 \end{aligned}
\end{equation}
for all $(\phi,\psi) \in X$. This finishes the lemma.
\end{proof}
From equation (\ref{eq:J'}), we observe that $(u,v)$ is a weak solution of $(\mathcal{P}_\la)$ iff $(u,v)$ is a critical point of the functional $J_\la$. For fixed $k \in \N$, let us denote
\begin{gather}\label{eq:X_i-1} 
X^1:=W_k^{\perp}\,\text{  and  }\, X^2:=W_k
\end{gather}
and accordingly we define for all $n\in\mathbb{N},$ 
\begin{equation}\label{eq:X_i-2}
X_n^1:=V_{k+1} \oplus V_{k+2} \oplus \cdots \oplus V_{k+n} \,\,\text{ and }\,\, X_n^2:=X^2.
\end{equation}
Then, we note that,
\begin{equation}\label{eq:X_i-3} 
	\begin{cases}
	X_1^1 \subset X_2^1 \subset \cdots \subset X_n^1 \cdots \subset X' \,\text{and}\,\, X^1=\overline{\cup_{i \in \N} X_i^1},  \\
	X_1	^2=X_2^2=\cdots =X_n^2=\cdots=X^2.
	\end{cases}	
\end{equation}	

\begin{lemma}\label{loclink}
Let $J_\lambda$ be defined above in \eqref{eq:J} and $(X^1, X^2)$ be defined above in \eqref{eq:X_i-1}-\eqref{eq:X_i-2}. Then, $J_\la$ has a local linking at zero with respect to $(X^1, X^2).$ 
\end{lemma}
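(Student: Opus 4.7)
The plan is to verify the two inequalities in Definition \ref{loclink}, which will follow by splitting $J_\lambda$ into its quadratic part $Q_\lambda(u,v):=\tfrac12\|(u,v)\|_X^2 - \tfrac{\lambda}{2}\langle L(u,v),(u,v)\rangle$ and the nonlinear part $-\int_\Omega F(x,u,v)\,dx$, and handling each separately. The quadratic part will carry the sign structure from the spectral gap around $\lambda$, while the nonlinear term will be shown to be of strictly higher order near zero.

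For the quadratic part, since $\lambda\in(\lambda_k,\lambda_{k+1})$ the Rayleigh-type estimates (\ref{eq:W_k}) and (\ref{eq:W_k-}) immediately give positive constants $c_1:=\tfrac12\bigl(1-\lambda/\lambda_{k+1}\bigr)$ and $c_2:=\tfrac12\bigl(\lambda/\lambda_k-1\bigr)$ such that
\begin{equation*}
Q_\lambda(u,v) \geq c_1\|(u,v)\|_X^2 \quad \text{for all }(u,v) \in X^1=W_k^\perp,
\end{equation*}
\begin{equation*}
Q_\lambda(u,v) \leq -c_2\|(u,v)\|_X^2 \quad \text{for all }(u,v) \in X^2=W_k.
\end{equation*}
This is the driving estimate and is purely spectral, using nothing about $F$.

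For the nonlinear part, combining $(H_3)$ with $(H_5)$ yields, for each $\varepsilon>0$, a constant $C_\varepsilon>0$ such that
\begin{equation*}
|F_u(x,u,v)|+|F_v(x,u,v)| \leq \varepsilon \bigl(|u|^2+|v|^2\bigr)^{1/2} + C_\varepsilon\bigl(|u|^{q-1}+|v|^{q-1}\bigr)
\end{equation*}
uniformly in $x\in\bar\Omega$. Integrating along the segment from $0$ to $(u,v)$, and using the normalisation $F(x,0,0)\equiv 0$ (which leaves $F_u,F_v$ and the problem $(\mathcal P_\lambda)$ unchanged), yields the pointwise estimate $|F(x,u,v)| \leq \varepsilon(|u|^2+|v|^2) + C_\varepsilon'(|u|^q+|v|^q)$. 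The continuous Sobolev embeddings (\ref{Sob-embed}) (available since $2<q<2_s^*$ by $(H_3)$) then upgrade this to
\begin{equation*}
\Bigl|\int_\Omega F(x,u,v)\,dx\Bigr| \leq \varepsilon C\|(u,v)\|_X^2 + C_\varepsilon''\|(u,v)\|_X^q.
\end{equation*}

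Putting everything back into (\ref{eq:J}) gives on $X^1$ the lower bound $J_\lambda(u,v) \geq (c_1-\varepsilon C)\|(u,v)\|_X^2 - C_\varepsilon''\|(u,v)\|_X^q$ and on $X^2$ the upper bound $J_\lambda(u,v) \leq -(c_2-\varepsilon C)\|(u,v)\|_X^2 + C_\varepsilon''\|(u,v)\|_X^q$. Fixing $\varepsilon$ with $\varepsilon C<\tfrac12\min\{c_1,c_2\}$ and then choosing $t>0$ so small that $C_\varepsilon'' t^{q-2} \leq \tfrac12\min\{c_1,c_2\}$ (possible because $q>2$), both conditions (\ref{geqcon}) and (\ref{leqcon}) hold on $\|(u,v)\|_X\leq t$. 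The main piece of care will be in the passage from the derivative bound to the pointwise estimate on $F$ itself; it is otherwise routine, but it is here that the implicit normalisation $F(x,0,0)\equiv 0$ must be invoked, without which $J_\lambda(0,0)$ would be nonzero and the local linking would fail at the origin.
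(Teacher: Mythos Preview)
Your proposal is correct and follows essentially the same approach as the paper: both split $J_\lambda$ into its quadratic part (controlled via the spectral estimates (\ref{eq:W_k})--(\ref{eq:W_k-}) and the gap $\lambda\in(\lambda_k,\lambda_{k+1})$) and the nonlinear term, combine $(H_3)$ with $(H_5)$ and the normalisation $F(x,0,0)=0$ to obtain $|F(x,u,v)|\le \varepsilon(|u|^2+|v|^2)+C_\varepsilon(|u|^q+|v|^q)$, pass through the Sobolev embedding, and then choose $\varepsilon$ and the radius small. The only cosmetic difference is that you package the spectral constants as $c_1,c_2$ up front, whereas the paper writes them inline as $\tfrac12-\tfrac{\lambda}{2\lambda_{k+1}}$ and $\tfrac12-\tfrac{\lambda}{2\lambda_k}$.
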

\begin{proof}
	For simplicity, we assume $F(x,0,0)=0$. If not, one can take
\begin{gather*}
\tilde{F}(x,u,v)=F(x,u,v)-F(x,0,0),\,\text{for all}\, (u,v) \in \R^2,	
\end{gather*}	
in place of $F(x,u,v)$. Using the hypothesis \textit{$(H_3)$} and Young's inequality, we obtain,
\begin{equation*}
\begin{aligned}
	|F(x,u,v)|&=\bigg| \int_0^1 \frac{d}{dt} (F(x,tu,tv))\,dt\bigg|\\
	&=\bigg|\int_0^1 \big( F_u(x,tu,tv)u+F_v(x,tu,tv)v \big)\,dt  \bigg|\\
	&\leq \int_0^1 \big( (|F_u(x,tu,tv)|+|F_v(x,tu,tv)|)(|u|+|v|) \big) \, dt\\
	&\leq C_1 \big(1+|u|^{q-1}+|v|^{q-1} \big) \big( |u|+|v| \big)\\
	&\leq C_1 \big( |u|+|v|+|u|^q+|u||v|^{q-1}+|u|^{q-1}|v|+|v|^q  \big)\\
	&\leq C_1 \big( |u|+|v|+|u|^q+\frac{1}{q}|u|^q+\frac{q-1}{q}|v|^{q}+\frac{q-1}{q}|u|^q+\frac{1}{q}|v|^q+|v|^q   \big).
\end{aligned}	
 \end{equation*}
This makes evident the following,
\begin{gather}\label{eq:F}
|F(x,u,v)| \leq 2C_1 \big( |u|+|v|+|u|^q+|v|^q \big) \, \text{for all}\, (u,v) \in \R^2 \,\text{and}\, \text{ for all } x \in \Om. 	
\end{gather}
For given $\eps>0$ and using hypothesis \textit{$(H_5)$}, there exists $\de_1>0$ such that 
\begin{equation}\label{eq:F_u,v}
|F_u(x,u,v)|+|F_v(x,u,v)| \leq \eps \big( |u|^2+|v|^2 \big) \,\text{for all}\, ( |u|^2 +|v|^2)^{\frac{1}{2}} \leq \de_1\, \text{a.e.}\, x \in \Om.
\end{equation}
	Consequently, we arrive at the fact that,
\begin{equation*}
\begin{aligned}
| F(x,u,v)| & \leq \int_0^1 \big(|F_u(x,tu,tv) |+|F_v(x,tu,tv)|  \big)\big(|u|+|v| \big)\,dt\\
&\leq \eps \int_0^1 \big( |tu|^2+|tv|^2 \big)^{\frac{1}{2}}\big( |u|+|v| \big)\, dt
	\eps \big( |u|^2+|v|^2 \big),
\end{aligned}
\end{equation*}			
	which implies,
	\begin{gather}\label{eq:F-}
	\big| F(x,u,v)\big| \leq \eps ( |u|^2+|v|^2 ) \,\text{for all}\, (u,v) \in \R^2 \,\text{with}\, \big(|u|^2+|v|^2 \big)^{\frac{1}{2}} \leq \de_1, \text{a.e.}\, x \in \Om. 
	\end{gather}
Bringing together (\ref{eq:F_u,v}) and (\ref{eq:F-}), we figure out the following,
\begin{gather}\label{eq:F--}
|F(x,u,v)| \leq \eps \big( |u|^2+|v|^2 \big)+C_\eps \big( |u|^q+|v|^q  \big)\,\text{for all}\, (u,v) \in \R^2 \,\text{and a.e.}\, x \in \Om.	
\end{gather}	
To see this, for $\de_1< {(|u|^2+|v|^2)^{\frac{1}{2}}}$, we have,
$$1< \frac{ (|u|^2+|v|^2)^{\frac{1}{2}}}{\de_1}\leq \frac{|u|+|v|}{\de_1}.
$$
Adding up the fact $q>1$ results to,
\begin{gather}\label{eq:u,v}
|u|+|v| \leq \big(\frac{2}{\de_1}\big)^{q-1} ( |u|^q+|v|^q ).	
\end{gather}	
Hence, from (\ref{eq:F}), using (\ref{eq:u,v}),we analyze  for $(|u|^2+|v|^2 )^{\frac{1}{2}}>\de_1$, 
\begin{gather}\label{eq:u,v-}
| F(x,u,v)| \leq C_{\de_1} (|u|^q+|v|^q ).	
\end{gather}	
Combining (\ref{eq:F-}) and (\ref{eq:u,v-}), we obtain (\ref{eq:F--}). Using (\ref{eq:F--})  and the embedding (\ref{Sob-embed}), we see that, for $(u,v) \in X$, 
\begin{equation}\label{eq:F^}
\begin{aligned}
\Iom F(x,u(x),v(x))\,dx&\leq \eps \Iom \big( |u|^2+|v|^2 \big)\, dx + C_\eps \Iom \big( |u|^q+|v|^q \big)\, dx\\
&\leq C^2 \big( \|u\|_{0,s}^2+\|v\|_{0,s}^2 \big)+ C_{\eps,q}\big( \|u\|_{0,s}^q+\|v\|_{0,s}^q \big)\\
&\leq \eps C \|(u,v)\|_X^2+C_{\eps,q} \|(u,v)\|_X^q.
\end{aligned}
\end{equation}
The above inequality together with (\ref{eq:W_k-}) gives us for $(u,v) \in X^1=W_k^\perp$ 
\begin{equation*}
	\begin{aligned}
J_\la(u,v)&\geq \frac{1}{2}\|(u,v)\|_X^2-\frac{\la}{2\la_{k+1}}\| (u,v)\|_X^2-\Iom F(x,u,v)\,dx\\
&\geq \big( \frac{1}{2}-\frac{\la}{2\la_{k+1}}-\eps C^2 \big) \|(u,v)\|_X^2-C_{\eps,q}\|(u,v)\|_X^q.\\
	\end{aligned}
\end{equation*}	
As $\la<\la_{k+1}$ and $q>2$, we can choose $\eps>0$ small so that $ \big( \frac{1}{2}-\frac{\la}{2\la_{k+1}}-\eps C^2 \big)>0$ and then there exists $R_1>0$ such that
\begin{equation}\label{eq:J-la}
J_\la(u,v) \geq 0 \, \text{for all}\, (u,v) \in X^1 \,\text{with}\, \|(u,v)\|_X \leq R_1.
\end{equation}
In a similar fashion, using (\ref{eq:W_k}) and (\ref{eq:F^}), we get for $(u,v) \in X^2=W_k$ 
\begin{gather*}
J_\la(u,v)\leq \big(\frac{1}{2}-\frac{\la}{2\la_k}+\eps C^2 \big)\|(u,v)\|_X^2+C_{\eps,q}\|(u,v)\|_X^q.
\end{gather*}	
Since $\la>\la_k$ and $q>2$, we can choose $\eps>0$ small enough so that $\big( \frac{1}{2}-\frac{\la}{2\la_k}+\eps C^2  \big)<0$ and there exists $R_2>0$ such that for $(u,v) \in X^2$ with $\|(u,v)\|_X \leq R_2$, 
\begin{gather}\label{J}
	J_\la(u,v)<0.
\end{gather}	
Taking $R=\min \{ R_1,R_2 \}$, we see from (\ref{eq:J-la}) and (\ref{J}) 
\begin{equation*}
	J_\la(u,v)
\begin{cases}
	\geq 0 \,\text{ for all }\, (u,v) \in X^1 \, \text{ with }\, \|(u,v)\|_X \leq R;\\
		\leq 0 \,\text{ for all }\, (u,v) \in X^2 \, \text{ with }\, \|(u,v)\|_X \leq R.
\end{cases}		
\end{equation*}	
Thus, we infer  that $J_\la$ has a local linking at zero with respect to $(X^1, X^2)$. This completes the proof.
\end{proof}
\begin{lemma}\label{bbd}
Let $J_\lambda$ be defined above in \eqref{eq:J}. Then,
$J_\la$ maps bounded sets of $X$ into bounded sets of $\R$.
\end{lemma}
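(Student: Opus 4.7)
The strategy is straightforward: estimate each of the three terms in the definition \eqref{eq:J} of $J_\la$ separately, using results already established.

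Fix $M>0$ and assume $\|(u,v)\|_X\le M$. The first term $\tfrac{1}{2}\|(u,v)\|_X^2$ is obviously bounded by $M^2/2$. For the second term, I would recall that in Section \ref{weak-sol} it was shown that $L:X\to X'$ is bounded and linear with
$$\|L(u,v)\|_{X'}\le 2C^2\big(|f|_\infty+|g|_\infty+|h|_\infty\big)\|(u,v)\|_X,$$
so by Cauchy--Schwarz
$$\Big|\tfrac{\la}{2}\langle L(u,v),(u,v)\rangle\Big|\le \la C^2\big(|f|_\infty+|g|_\infty+|h|_\infty\big)M^2.$$

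For the third term I would reuse the pointwise growth estimate \eqref{eq:F} proved inside Lemma \ref{loclink}, namely
$$|F(x,u,v)|\le 2C_1\big(|u|+|v|+|u|^q+|v|^q\big)\quad\text{for all }(u,v)\in\R^2,\ x\in\Om.$$
Since $2<q<2^*_s$ by hypothesis $(H_3)$, the continuous embedding \eqref{Sob-embed} (applied with $r=1$, $r=q$) yields
$$\Iom\big(|u|+|v|\big)\,dx\le C\big(\|u\|_{0,s}+\|v\|_{0,s}\big),\qquad \Iom\big(|u|^q+|v|^q\big)\,dx\le C\big(\|u\|_{0,s}^q+\|v\|_{0,s}^q\big),$$
so in total
$$\Big|\Iom F(x,u,v)\,dx\Big|\le C'\big(\|(u,v)\|_X+\|(u,v)\|_X^q\big)\le C'(M+M^q).$$
Combining the three bounds gives $|J_\la(u,v)|\le C(M)$ for a constant depending only on $M$, $\la$, $q$, $f$, $g$, $h$ and the embedding constants, which is exactly the claim.

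There is no substantive obstacle here; the proof is purely a matter of bookkeeping and essentially re-collects estimates already in place. The only point that requires a moment's care is verifying that the growth bound \eqref{eq:F} on $F$ is available at this stage of the paper: it was derived as an intermediate step in Lemma \ref{loclink} from $(H_3)$ alone, so it can be invoked without circularity (or, equivalently, one can re-derive it here in one line by integrating $|F_u|+|F_v|$ along the segment from $(0,0)$ to $(u,v)$).
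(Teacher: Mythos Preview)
Your proof is correct and follows essentially the same approach as the paper: bound each of the three terms in $J_\la$ separately using estimates already established in Section~\ref{weak-sol} and Lemma~\ref{loclink}. The only cosmetic differences are that the paper controls the $L$-term via the variational characterization \eqref{eq:la_1} of $\la_1$ rather than the operator norm bound on $L$, and invokes the slightly sharper estimate \eqref{eq:F^} in place of \eqref{eq:F}; neither change is material.
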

\begin{proof}
Using the definition of $\la_1$ and (\ref{eq:F^}) we deduce
\begin{equation*}
J_\la(u,v) \leq \big( \frac{1}{2}+\frac{\la}{2\la_1} \big) \| (u,v)\|_X^2+\eps C^2 \|(u,v)\|_X^2+C_{\eps,q}\|(u,v)\|_X^q.
\end{equation*}
Hence, if $\| (u,v) \|_X \leq T$, then we get
\begin{gather*}
| J_\la (u,v) | \leq \big( \frac{1}{2}+\frac{\la}{2\la_1} \big)T^2+\eps C^2T^2+C_{\eps,q}T^q.	
\end{gather*}	
This implies that, $J_\la$ maps bounded sets of $X$ into bounded sets of $\R$, bearing out the proof of the lemma.
\end{proof}
\begin{lemma}\label{palais}
Let $J_\lambda$ be defined above in \eqref{eq:J}. Then, $J_\la$ satisfies (PS)$^*$ condition.
\end{lemma}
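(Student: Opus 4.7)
The overall plan is to follow the standard three-stage scheme for $(PS)^*$ proofs: boundedness of the sequence, identification of its weak limit as a critical point of $J_\la$, and upgrade from weak to strong convergence. Let $(u_n,v_n) = u_{\beta_n} \in X_{\beta_n}$ with $\{\beta_n\}$ admissible, $\sup_n J_\la(u_n,v_n)<\infty$, and $J'_{\beta_n}(u_n,v_n) \to 0$ in $X'_{\beta_n}$; the goal is to produce a subsequence converging strongly in $X$ to a critical point of $J_\la$.

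First I would establish boundedness in $X$. The essential identity is
\begin{equation*}
2 J_\la(u_n,v_n) - \langle J'_\la(u_n,v_n),(u_n,v_n)\rangle = \Iom \big(F_u(x,u_n,v_n)u_n + F_v(x,u_n,v_n)v_n - 2F(x,u_n,v_n)\big)\,dx,
\end{equation*}
where, since $(u_n,v_n) \in X_{\beta_n}$, $\langle J'_\la(u_n,v_n),(u_n,v_n)\rangle = \langle J'_{\beta_n}(u_n,v_n),(u_n,v_n)\rangle = o(\|(u_n,v_n)\|_X)$. Hypothesis $(H_7)$ converts the resulting bound on the left-hand side into $\Iom (|u_n|^2+|v_n|^2)^{p/2}\,dx \leq C(1 + \|(u_n,v_n)\|_X)$, yielding an $L^p$-control. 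Next I would read $\|(u_n,v_n)\|_X^2$ off from $\langle J'_{\beta_n}(u_n,v_n),(u_n,v_n)\rangle = o(\|(u_n,v_n)\|_X)$, dominating the nonlinear integral through $(H_3)$ by $\Iom (|u_n|^q+|v_n|^q)\,dx$, and interpolating $L^q$ between $L^p$ and $L^{2_s^*}$ (the latter tamed by the Sobolev embedding $X_{0,s}(\Om) \hookrightarrow L^{2_s^*}(\Om)$). The numerical condition $p \geq \frac{2N}{N+2s}(q-1)$ in $(H_7)$, respectively $p>q-1$ when $N \leq 2s$, is calibrated so that the exponent of $\|(u_n,v_n)\|_X$ after interpolation is strictly less than $2$, producing $\|(u_n,v_n)\|_X^2 \leq C + C\|(u_n,v_n)\|_X^\alpha$ with $\alpha<2$ and hence boundedness. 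This interpolation and exponent bookkeeping is where I expect the main obstacle to lie.

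With boundedness in hand, up to a subsequence $(u_n,v_n) \rightharpoonup (u,v)$ in $X$ and, by the compact embedding (\ref{Cpt-embed}), strongly in $L^r(\Om) \times L^r(\Om)$ for every $r \in [1,2_s^*)$. For any $h \in X_\alpha$, admissibility forces $h \in X_{\beta_n}$ eventually, so
\begin{equation*}
|\langle J'_\la(u_n,v_n),h\rangle| = |\langle J'_{\beta_n}(u_n,v_n),h\rangle| \leq \|J'_{\beta_n}(u_n,v_n)\|_{X'_{\beta_n}}\|h\|_X \to 0.
\end{equation*}
Compactness of $L$ together with continuity of the Nemytskii operators associated with $F_u, F_v$ (from $(H_3)$ combined with strong $L^q$ convergence) lets me pass to the limit term by term, yielding $\langle J'_\la(u,v),h\rangle = 0$; density of $\cup_\alpha X_\alpha$ in $X$ promotes this to every $h \in X$, so $(u,v)$ is a critical point. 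Finally, to upgrade to norm convergence I would test $J'_{\beta_n}(u_n,v_n)$ against $(u_n,v_n) - P_{\beta_n}(u,v) \in X_{\beta_n}$, where $P_{\beta_n}$ denotes the natural projection onto $X_{\beta_n}$ (so that $P_{\beta_n}(u,v) \to (u,v)$ strongly in $X$). The compact pieces on the right-hand side vanish in the limit, leaving $\|(u_n,v_n)\|_X^2 \to \|(u,v)\|_X^2$; together with weak convergence in the Hilbert space $X$, this delivers strong convergence and closes the $(PS)^*$ verification.
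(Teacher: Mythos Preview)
Your plan is correct and would close the lemma, but the boundedness step follows a genuinely different route from the paper's. The paper argues by contradiction: assuming $\|(u_{\beta_n},v_{\beta_n})\|_X\to\infty$, the identity together with $(H_7)$ yields the relative control $\big(\Iom(|u_{\beta_n}|^p+|v_{\beta_n}|^p)\,dx\big)/\|(u_{\beta_n},v_{\beta_n})\|_X\to 0$; it then splits $(u_{\beta_n},v_{\beta_n})=(\bar u,\bar v)+(u^\perp,v^\perp)\in W_k\oplus W_k^\perp$ and tests $J'_\la$ against each component separately. The spectral inequalities \eqref{eq:W_k}--\eqref{eq:W_k-} supply the definite coefficients $(1-\la/\la_{k+1})$ and $(\la/\la_k-1)$, and the nonlinear remainder is handled by H\"older with the conjugate pair $\big(\tfrac{p}{q-1},\tfrac{p}{p-q+1}\big)$, the second exponent lying at or below $2_s^*$ precisely by the numerical restriction in $(H_7)$. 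This forces both normalised components $\|(\bar u,\bar v)\|_X/\|(u_{\beta_n},v_{\beta_n})\|_X$ and $\|(u^\perp,v^\perp)\|_X/\|(u_{\beta_n},v_{\beta_n})\|_X$ to vanish, a contradiction. Your direct interpolation avoids the eigenspace splitting altogether and is insensitive to where $\la$ sits in the spectrum; the price is that you must also absorb the linear contribution $\la\langle L(u_n,v_n),(u_n,v_n)\rangle$, which you do not mention explicitly but which folds into your $L^q$ estimate via $|u_n|_2^2+|v_n|_2^2\leq C(1+|u_n|_q^q+|v_n|_q^q)$. For the passage to the limit the paper expands $\|(u_{\beta_n},v_{\beta_n})-(u,v)\|_X^2$ directly as the sum of a $J'_\la$-difference term, the compact $L$-piece and the Nemytskii piece, each of which vanishes; your projection-and-norm-convergence variant is an equivalent mechanism.
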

\begin{proof}
Let $(u_{\ba n}, v_{\ba n}) \subset X$ be a sequence with $\{\ba_n\}$ be admissible (defined in Section (\ref{abstract}) satisfies
\begin{gather}\label{Eq:J'}
	(u_{\ba_n}, v_{\ba_n}) \subset X_{\ba_n},\, \, c=sup_{n \in \N} J_\la(u_{\ba_n}, v_{\ba_n})<\infty, \, J'_\la(u_{\ba_n}, v_{\ba_n}) \to 0 \,\text{ in }\, X'.
\end{gather}	
We need to show that there exists $(u,v) \in X$ up to a subsequence such that,
\begin{gather*}
\| (u_{\ba_n}, v_{\ba_n})-(u,v)\|_X \to 0 \, \text{as}\, n \to \infty.	
\end{gather*}	
\textit{\bf Claim}. The sequence $(u_{\ba_n}, v_{\ba_n})$ is bounded in $X$.

\textit{Proof of Claim}. First, by Hypothesis $(H_7)$, we note that there exists $C_1,C_2>0$ such that
\begin{equation}\label{Eq:F}
F_u(x,u,v)u+F_v(x,u,v)v-2F(x,u,v) \geq C_1 \big( |u|^2+|v|^2 \big)^{\frac{p}{2}}-C_2,
\end{equation}
for all $(u,v) \in \R^2$, a.e. $x \in \Om$.
We suppose that there exists a subsequence of $(u_{\ba_n}, v_{\ba_n})$, still denoted by $(u_{\ba_n}, v_{\ba_n})$ such that $\|(u_{\ba_n}, v_{\ba_n})\|_X \to \infty$ as $n \to \infty$.
We compute using (\ref{Eq:F}) that,
\begin{equation}
\begin{aligned}
2J_\la(u_{\ba_n}, v_{\ba_n})&-\<J'_\la(u_{\ba_n}, v_{\ba_n}, (u_{\ba_n}, v_{\ba_n}))\>\\
&=\Iom \bigg( F_u(x, u_{\ba_n}, v_{\ba_n}) u_{\ba_n} +F_v(x, u_{\ba_n}, v_{\ba_n})v_{\ba_n} 
-2F(x,u_{\ba_n}, v_{\ba_n}) \bigg)\, dx\\
&\geq C_1 \Iom \big( |u_{\ba_n}|^2+ |v_{\ba_n}|^2   \big)^{\frac{p}{2}}\, dx-C_2 \mathcal{L}^N(\Om),
\end{aligned}
\end{equation}
where $\mathcal{L}^N(\Om)$ denotes the $N$-dimensional Lebesgue measure of $\Om$.
Using (\ref{Eq:J'}), we obtain,
\begin{equation*}
\begin{aligned}
&C_1 \frac{1}{\| u_{\ba_n}, v_{\ba_n}\|_X}\Iom \big( |u_{\ba_n}|^2+ |v_{\ba_n}|^2 \big)^{\frac{p}{2}}-C_2\frac{\mathcal{L}^N(\Om)}{ \| (u_{\ba_n}, v_{\ba_n})\|_X }\\
&\quad \quad\leq \frac{2C}{\| u_{\ba_n}, v_{\ba_n}\|_X}
-\frac{\<J'(u_{\ba_n}, v_{\ba_n}), (u_{\ba_n}, v_{\ba_n})\>}{\|u_{\ba_n}, v_{\ba_n}\|_X} \to 0 \, \text{as}\, n \to \infty.
\end{aligned}
\end{equation*}
Hence, we have, 
\begin{gather}\label{Eq:*}
\frac{\Iom \big( |u_{\ba_n}|^p +|v_{\ba_n}|^p  \big) \, dx}{ \| (u_{\ba_n}, v_{\ba_n})\|_X}	\to 0 \,\text{as}\, n \to \infty.
\end{gather}	
Let us use the decomposition of $(u_{\ba_n}, v_{\ba_n}) \in X$ as
\begin{gather*}
(u_{\ba_n}, v_{\ba_n})=(\bar{u}_{\ba_n}, \bar{v}_{\ba_n}) + (u_{\ba_n}^\perp, v_{\ba_n}^{\perp}) \in W_k \oplus W_k^\perp.
\end{gather*}
Using hypothesis \textit{$(H_3)$}, estimate (\ref{eq:W_k-}) and H\"older's inequality, we demonstrate that,
\begin{equation*}
\begin{aligned}
&\<J'_\la (u_{\ba_n}, v_{\ba_n})	, (u_{\ba_n}^\perp, v_{\ba_n}^\perp)\> \\
&=\< (u_{\ba_n}, v_{\ba_n}), (u_{\ba_n}^\perp, v_{\ba_n}^\perp) \>_X-\la \< L(u_{\ba_n}, v_{\ba_n}) , (u_{\ba_n}^\perp, v_{\ba_n}^\perp)  \>\\
&\qquad -\Iom \big[ F_u(x,u_{\ba_n}, v_{\ba_n})u_{\ba_n}^\perp+F_v(x, u_{\ba_n}, v_{\ba_n}) v_{\ba_n}^\perp  \big]\, dx\\
&\geq \| (u_{\ba_n}^\perp, v_{\ba_n}^\perp) \|_X^2-\frac{\la}{\la_{k+1}}\| (u_{\ba_n}^\perp, v_{\ba_n}^\perp )\|_X^2\\
&\qquad-\Iom \big[ |F_u(x,u_{\ba_n}, v_{\ba_n})|^2+|F_v(x,u_{\ba_n}, v_{\ba_n})|^2   \big]^{\frac{1}{2}} \big( |u_{\ba_n}^\perp|^2+ |v_{\ba_n}^\perp|^2  \big)^{\frac{1}{2}}\, dx\\
&\geq \big(1-\frac{\la}{\la_{k+1}}\big)\| (u_{\ba_n}^\perp, v_{\ba_n}^\perp)\|_X^2-C_2 \big(|u_{\ba_n}^\perp|_1+ |v_{\ba_n}^\perp|_1  \big)\\
&\qquad -C_3 \big( |u_{\ba_n}|_p^{q-1}+ |v_{\ba_n}|_p^{q-1} \big) \big( \Iom |u_{\ba_n}^\perp|^{\frac{p}{p-q+1}}+ |v_{\ba_n}^\perp|^{\frac{p}{p-q+1}}   \big)^{\frac{p-q+1}{p}}\\
&\geq \big(1-\frac{\la}{\la_{k+1}}\big)\| (u_{\ba_n}^\perp, v_{\ba_n}^\perp)\|_X^2-C_4 \| u_{\ba_n}^\perp, v_{\ba_n}^\perp\|_X-C_5 \big( |u_{\ba_n}|_p^{q-1}+ |v_{\ba_n}|_p^{q-1} \big)
\| ( u_{\ba_n}^\perp, v_{\ba_n}^\perp ) \|_X.
\end{aligned}
\end{equation*}		
This yields us,
\begin{gather}\label{Eq:*-}
	\big(1-\frac{\la}{\la_{k+1}}\big)\| (u_{\ba_n}^\perp, v_{\ba_n}^\perp)\|_X\leq \frac{\< J'_\la(u_{\ba_n}, v_{\ba_n}), (u_{\ba_n}^\perp, v_{\ba_n}^\perp)\>}{\| (u_{\ba_n}^\perp, v_{\ba_n}^\perp)\|_X}+C_4+C_5\big(|u_{\ba_n}|_p^{q-1}+ |v_{\ba_n}|_p^{q-1} \big).
\end{gather}	
As $1<q-1<p$, by Young's inequality we obtain,
\begin{gather}\label{Eq:*1}
\big( |u_{\ba_n}|_p^{q-1}+ |v_{\ba_n}|_p^{q-1} \big) \leq \frac{p}{q-1} \big( |u_{\ba_n}|_p^p+ |v_{\ba_n}|_p^p  \big)+(\frac{p-q-1}{p}).
\end{gather}
	Using (\ref{Eq:*1}), (\ref{Eq:*}), we declare from (\ref{Eq:*-}),
	\begin{gather}\label{Eq:-i}
	\frac{ \| (u_{\ba_n}^\perp, v_{\ba_n}^\perp)\|_X}{\| (u_{\ba_n}, v_{\ba_n})\|_X} \to 0, \text{as}\, n \to \infty.
	\end{gather}
In a similar technique, we justify that,
	\begin{gather}\label{Eq:-ii}
	\frac{ \| (\bar{u}_{\ba_n}, \bar{v}_{\ba_n})\|_X}{\| (u_{\ba_n}, v_{\ba_n})\|_X} \to 0, \text{as}\, n \to \infty.
\end{gather}
Combining (\ref{Eq:-i}) and (\ref{Eq:-ii}), we note that,
\begin{gather*}
1=\frac{\| (u_{\ba_n}, v_{\ba_n})\|_X^2}{\| ( u_{\ba_n}, v_{\ba_n})\|_X^2}=\frac{ \| (\bar{u}_{\ba_n}, \bar{v}_{\ba_n})_X^2 + \| (  u_{\ba_n}^\perp, v_{\ba_n}^\perp) \|_X^2 }{\| (u_{\ba_n}, v_{\ba_n})\|_X^2} \to 0 \,\text{as}\, n \to \infty,
\end{gather*}
which is not possible.
 Hence, $(u_{\ba_n}, v_{\ba_n})$ is bounded in $X$. This winds-up the Claim.

As $X$ is a Hilbert space, there exists $(u,v) \in X$ such that, up to a subsequence 
\begin{equation}\label{Eq:*2}
(u_{\ba_n}, v_{\ba_n}) \deb (u,v) \,\text{weakly in}\, X,\text{as}\, n \to \infty.
\end{equation}
We find that, 
\begin{equation*}
	(u_{\ba_n}, v_{\ba_n}) \to (u,v) \, \text{strongly in}\, L^2(\Om) \times L^2(\Om)
\end{equation*}	
and $u_{\ba_n}(x) \to u(x)$ a.e. $x \in \Rn$, $v_{\ba_n}(x) \to v(x)$ a.e. $x \in \Rn$.
By(\ref{Cpt-embed}), it turns out that,
\begin{equation}\label{Eq:*3}
\begin{aligned}
\|(u_{\ba_n}, v_{\ba_n})-(u,v) \|^2_X&=\< J'_\la(u_{\ba_n}, v_{\ba_n}), J'_\la(u,v), (u_{\ba_n}, v_{\ba_n})-(u,v)\>\\
&\quad+ \la \< L(u_{\ba_n}, v_{\ba_n})-L(u,v), (u_{\ba_n}, v_{\ba_n})-(u,v)\>\\
&\quad +\Iom \big( \big( F_u(x,u_{\ba_n}, v_{\ba_n})-F_u(x,u,v) \big)(u_{\ba_n}-u)\\
&\quad+ \big( F_v(x,u_{\ba_n}, v_{\ba_n})-F_u(x,u,v) \big) (v_{\ba_n}-v)  \big)\, dx.
\end{aligned}
\end{equation}
By using (\ref{Eq:*2}), we certify that,
\begin{gather*}
\<J'_\la(u_{\ba_n}, v_{\ba_n})-J'_\la(u,v), (u_{\ba_n}, v_{\ba_n})-(u,v)\> \to 0\,\text{ as }\, n \to \infty.
\end{gather*}
Analyzing the fact that $L$ is a compact linear operator, we see that,
\begin{gather*}
	\<L(u_{\ba_n}, v_{\ba_n})-L(u,v), (u_{\ba_n}, v_{\ba_n})-(u,v)\> \to 0\,\text{ as }\, n \to \infty.
\end{gather*}
By using the assumption \textit{$(H_3)$}, we infer that,
\begin{gather*}
\Iom \big[ (F_u(x,u_{\ba_n}, v_{\ba_n})-F_u(x,u,v)) (u_{\ba_n}-u)+ ( F_v(x,u_{\ba_n}, v_{\ba_n})-F_v(x,u,v)) (v_{\ba_n}-v)   \big]\,dx 
\to 0,	
\end{gather*}	
as $n \to \infty$. From (\ref{Eq:*3}), finally, we conclude that,
$$ \| (u_{\ba_n}, v_{\ba_n})-(u,v)\| \to 0 \,\text{as}\, n \to \infty.
$$
This summits the proof of this lemma.
\end{proof}

\begin{lemma}\label{coercive.1}
Let $J_\lambda$ be defined above in \eqref{eq:J}. Then, there holds
$$J_\la(u_k,v_k) \to -\infty \,\text{for}\, (u_k,v_k) \in X_k^\perp \oplus X^2 \, \text{ with }\, \| (u_k,v_k)\|_X \to \infty.
$$
\end{lemma}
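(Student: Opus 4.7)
The plan is to identify the subspace $X_k^1 \oplus X^2$ (I read the $\perp$ in the statement as a typo; what is meant, matching condition (d) of Theorem \ref{assthm.1}, is the approximating subspace $X_k^1 \oplus X^2$) as a finite-dimensional subspace of $X$, and to exploit the super-quadratic growth of $F$ at infinity supplied by $(H_6)$. Concretely, let $k_0$ denote the fixed index of Theorem \ref{thm-1}, so that $\lambda \in (\lambda_{k_0}, \lambda_{k_0+1})$. From \eqref{eq:X_i-1}--\eqref{eq:X_i-2},
\[
X_k^1 \oplus X^2 \;=\; \bigl(V_{k_0+1}\oplus\cdots\oplus V_{k_0+k}\bigr)\oplus W_{k_0} \;=\; W_{k_0+k},
\]
which is a finite direct sum of eigenspaces, hence finite-dimensional. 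On this subspace all norms are equivalent; in particular, I would fix $c_k>0$ such that $|u|_2^2+|v|_2^2 \geq c_k \|(u,v)\|_X^2$ for every $(u,v)\in W_{k_0+k}$.

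Next I would convert $(H_6)$ into a uniform lower bound on $F$. Given any $M>0$, $(H_6)$ yields some $R>0$ with $F(x,u,v)\geq M(|u|^2+|v|^2)$ whenever $|(u,v)|\geq R$, uniformly for a.e.\ $x\in\Omega$. Since $F\in C^1(\bar\Omega\times\R^2)$ is bounded on the compact set $\bar\Omega\times\overline{B_R(0)}$, enlarging the constant gives
\[
F(x,u,v) \geq M(|u|^2+|v|^2) - C_M \qquad \text{for all } (u,v)\in\R^2,\text{ a.e. } x\in\Omega.
\]
Integrating over $\Omega$ and using the norm equivalence produces $\int_\Omega F(x,u,v)\,dx \geq M c_k\|(u,v)\|_X^2 - C_M|\Omega|$ on $W_{k_0+k}$.

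I would combine this with the Rayleigh-type inequality \eqref{eq:W_k}, which on $W_{k_0+k}$ reads $\langle L(u,v),(u,v)\rangle \geq \lambda_{k_0+k}^{-1}\|(u,v)\|_X^2$, to obtain
\[
J_\lambda(u,v) \;\leq\; \Bigl(\tfrac{1}{2} - \tfrac{\lambda}{2\lambda_{k_0+k}} - M c_k\Bigr)\|(u,v)\|_X^2 \;+\; C_M|\Omega|.
\]
Since $M>0$ is arbitrary, I would then pick it large enough (depending on $k$) that the coefficient in front of $\|(u,v)\|_X^2$ is strictly negative. The desired conclusion $J_\lambda(u_k,v_k)\to -\infty$ as $\|(u_k,v_k)\|_X\to\infty$ within $W_{k_0+k}$ follows immediately.

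The main (only) delicate step is the passage from the pointwise asymptotic estimate delivered by $(H_6)$ to a lower bound uniform on all of $\R^2$; after that, the argument is a short combination of finite-dimensional norm equivalence with the spectral estimate \eqref{eq:W_k}. The dependence of $c_k$ and of the threshold for $M$ on $k$ is harmless because the lemma treats each $k$ separately.
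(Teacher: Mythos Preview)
Your proposal is correct and follows essentially the same approach as the paper: both use the finite-dimensionality of $X_k^1\oplus X^2$ to get an equivalence between $\|\cdot\|_X$ and the $L^2$-norm, convert $(H_6)$ into the uniform lower bound $F(x,u,v)\geq M(|u|^2+|v|^2)-C_M$, and then take $M$ large. Your observation that $X_k^1\oplus X^2=W_{k_0+k}$, which lets you apply the spectral estimate \eqref{eq:W_k} to the whole subspace at once, is a mild streamlining of the paper's version, which splits into the $X^2$ and $X_k^1$ pieces and handles the quadratic form separately on each before absorbing everything with $T$ large; the substance is the same.
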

\begin{proof}
Considering as $X_k^1 \oplus X^2$ is a finite dimensional subspace of $X$,  any norms on $X_k^1 \oplus X^2$ are equivalent. Hence, there exists $C_6>0$ such that
\begin{gather}\label{Eq:*4}
\| (u, v)\|_X \leq C_6 \big( |u|_2+|u|_{2} \big) \,\text{for all}\, (u,v) \in X_k^1 \oplus X^2.	
\end{gather}	
From hypothsis \textit{$(H_6)$}, we infer that for any $T>0$, there exists constant $C_T>0$ such that,
\begin{gather}\label{Eq:*5}
F(x,u,v) \geq T \big( |u|^2+|v|^2  \big)-C_T, \text{for all}\, (u,v) \in \R^2, \,\text{a.e.} \, x \in \Om.	
\end{gather}	
Using (\ref{eq:W_k}), (\ref{Eq:*4}), (\ref{Eq:*5}) and for $T>0$ large enough and for $$(u_k, v_k)=(\bar{u}_k, \bar{v}_k) \oplus (\tilde{u}_k,\tilde{v}_k) \in X_k^1 \oplus X^2,$$ we have,
\begin{equation*}
\begin{aligned}
J_\la(u_k,v_k)&\leq \big(\frac{1}{2}-\frac{\la}{2\la_k} \big)\| (\tilde{u}_k,\tilde{v}_k) \|_X^2+\frac{1}{2}
\| (\bar{u}_k,\bar{v}_k)\|_X^2-\frac{\la}{2}\< L((\bar{u}_k,\bar{v}_k)), (\bar{u}_k,\bar{v}_k)\>\\
&\quad-\Iom \big(T (|{u}_k|^2+|{v}_k|^2) -C_T \big)\, dx\\
&\leq \big(\frac{1}{2}-\frac{\la}{2\la_k} \big)\| (\tilde{u}_k,\tilde{v}_k) \|_X^2+\big( \frac{1}{2}+C_7   \big)\|(\bar{u}_k,\bar{v}_k)\|_X^2\\
&\quad -TC_8\big( \|(\bar{u}_k,\bar{v}_k)\|_X^2+\| (\bar{u}_k,\bar{v}_k)\|_X^2 \big)+C_T\mathcal{L}^N(\Om)\\
& \leq \big(\frac{1}{2}-\frac{\la}{2\la_k} \big)\| ({u}_k,{v}_k) \|_X^2+C_T \mathcal{L}^N(\Om).
\end{aligned}	
\end{equation*}	
This essentially implies, $J_\la(u_k,v_k) \to -\infty$ as $(u_k,v_k) \in X_k^1 \oplus X^2$ with $\|(u_k,v_k)\|_X \to \infty$, finishing the proof.
\end{proof}
\begin{proof}[Proof of Theorem \ref{thm-1}]
Combining Lemma \ref{c1}-\ref{coercive.1}, we conclude that $J_\la$ satisfies the conditions (a)-(d) of Theorem \ref{assthm.1}. Therefore, problem $(\mathcal{P}_\la)$ has a solution for $\la_k<\la<\la_{k+1}$ in $X$. This concludes the proof of the theorem.
\end{proof}

\section{Proof of Theorem \ref*{thm-2}}\label{Thm-2}
In this section, we toss around for the case $\la=\la_1$ and deal with the functional 
$J_{\la_1}:X \to \R$ given by:
\begin{equation}\label{eq:J-la_1}
J_{\la_1}(u,v)=\frac{1}{2} \|(u,v)\|_X^2-\frac{\la_1}{2}\< L(u,v), (u,v) \>-\Iom F(x,u,v)\, dx, \, (u,v) \in X,
\end{equation}	
and  define $K:X \to \R$ given by,
\begin{gather}\label{eq:K}
K(u,v)=\Iom F(x,u,v)\, dx \,\quad (u,v) \in X.
\end{gather}	
We recall from Lemma \ref{c1} that $K$ is fo class $C^1$  and we denote its derivative by $K'.$
\begin{lemma}\label{compact}
Let $K$ be defined in \eqref{eq:K} above. Then $K':X \to X'$ is compact.
\end{lemma}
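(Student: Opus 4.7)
The plan is to show sequential compactness of $K'$: take a bounded sequence $(u_n,v_n)\subset X$ and produce a subsequence along which $K'(u_n,v_n)$ converges in $X'$. Since $X$ is a Hilbert space, bounded sequences admit weakly convergent subsequences, so after extraction we may assume $(u_n,v_n)\deb(u,v)$ in $X$. The compact embedding \eqref{Cpt-embed} (applied with the exponent $q\in(2,2^*_s)$ coming from hypothesis $(H_3)$) then upgrades this to strong convergence $(u_n,v_n)\to(u,v)$ in $L^q(\Om)\times L^q(\Om)$, together with a.e.\ pointwise convergence along a further subsequence.

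Next I would identify $K'(u,v)$ explicitly: arguing as in Lemma \ref{c1},
\[
\<K'(u,v),(\phi,\psi)\>=\Iom\big(F_u(x,u,v)\phi+F_v(x,u,v)\psi\big)\,dx,\qquad (\phi,\psi)\in X.
\]
By the growth bound $(H_3)$, $|F_u(x,u,v)|+|F_v(x,u,v)|\le C_1(1+|u|^{q-1}+|v|^{q-1})$, so the Nemytskii operators $(u,v)\mapsto F_u(\cdot,u,v)$ and $(u,v)\mapsto F_v(\cdot,u,v)$ map $L^q(\Om)\times L^q(\Om)$ continuously into $L^{q'}(\Om)$, where $q'=q/(q-1)$. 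Combined with the strong $L^q$ convergence of $(u_n,v_n)$, this gives
\[
F_u(\cdot,u_n,v_n)\to F_u(\cdot,u,v),\qquad F_v(\cdot,u_n,v_n)\to F_v(\cdot,u,v)\quad\text{in }L^{q'}(\Om).
\]

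For the final step, I would estimate, for any $(\phi,\psi)\in X$ with $\|(\phi,\psi)\|_X\le1$, using H\"older's inequality and the continuous embedding $X_{0,s}(\Om)\hookrightarrow L^q(\Om)$ from \eqref{Sob-embed},
\[
\begin{aligned}
&\big|\<K'(u_n,v_n)-K'(u,v),(\phi,\psi)\>\big|\\
&\qquad\le\|F_u(\cdot,u_n,v_n)-F_u(\cdot,u,v)\|_{q'}|\phi|_q+\|F_v(\cdot,u_n,v_n)-F_v(\cdot,u,v)\|_{q'}|\psi|_q\\
&\qquad\le C\big(\|F_u(\cdot,u_n,v_n)-F_u(\cdot,u,v)\|_{q'}+\|F_v(\cdot,u_n,v_n)-F_v(\cdot,u,v)\|_{q'}\big).
\end{aligned}
\]
Taking the supremum over the unit ball of $X$ and sending $n\to\infty$ yields $\|K'(u_n,v_n)-K'(u,v)\|_{X'}\to0$, as required.

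The only delicate ingredient is the continuity of the Nemytskii operators from $L^q\times L^q$ into $L^{q'}$; this is a standard consequence of $(H_3)$ (the exponent matches: $(q-1)\cdot q'=q$) together with dominated convergence after passing to an a.e.-convergent subsequence and bounding pointwise by an equi-integrable majorant built from the $L^q$-convergent sequences $|u_n|$ and $|v_n|$. Everything else is just weak-to-strong upgrading via the compact Sobolev embedding available because $q<2^*_s$.
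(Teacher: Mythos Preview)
Your overall scheme matches the paper's exactly: extract a subsequence, upgrade weak to strong convergence via the compact Sobolev embedding, invoke Nemytskii continuity for $F_u,F_v$, and finish with H\"older plus the continuous embedding in the dual-norm estimate.

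There is, however, one genuine issue. Lemma~\ref{compact} sits in Section~\ref{Thm-2}, whose standing assumptions are those of Theorem~\ref{thm-2}, namely $(H_4)$, $(H_8)$, $(H_9)$. Hypothesis $(H_3)$ is \emph{not} in force here, so you cannot invoke it for the growth bound or for the choice of exponent $q$. The paper's own proof uses $(H_4)$ instead: from the $\limsup$ condition (together with $F\in C^1(\bar\Om\times\R^2)$) one obtains the \emph{linear} growth
\[
|F_u(x,u,v)|+|F_v(x,u,v)|\le (a(x)+\eps)\big(|u|^2+|v|^2\big)^{1/2}+d_\eps,
\]
which (since $a\in L^\infty(\Om)$) makes the Nemytskii maps continuous from $L^2(\Om)\times L^2(\Om)$ into $L^2(\Om)$. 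The paper then pairs with $(w,z)\in X$ via H\"older in $L^2$ and the embedding $X\hookrightarrow L^2\times L^2$.

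Your argument is salvaged verbatim by replacing $(H_3)$ with $(H_4)$ and taking $q=2$ (so $q'=2$ and $(q-1)q'=2$); everything else you wrote goes through unchanged.
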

\begin{proof}
Let $\{(u_n,v_n) \} \subset X$ be a bounded sequence in $X$. We need to show that upto a subsequence $K'(u_n,v_n)$ is convergent in $X'$. As $(u_n,v_n)$ is bounded in $X$, there exists $(u_0,v_0) \in X$ such that up to a subsequence,
$(u_n,v_n) \to (u_0,v_0)$ strongly in $L^2(\Om) \times L^2(\Om)$, $u_n \to u$ a.e. in $\Rn$ and $v_n \to v$ a.e. in $\Rn$. By using Hypothesis \textit{$(H4)$}, we access that for every $\eps>0$, there exists $d_\eps>0$ such that
\begin{equation}\label{Eq:F-}
|F_u(x,u,v)|+|F_v(x,u,v)| \leq (a(x) + \eps) ( |u|^2+|v|^2 )^{\frac{1}{2}}+d_\epsilon,
\end{equation}	
a.e. $x \in \Om$, for all $(u,v) \in \R^2$.
In view of the fact that $a \in L^{\infty}(\Om)$ and [Lemma 2.3, \cite{Drabek-94}], we attain that, 
\begin{gather}\label{eq:F-u,v}
	| F_u( \cdot, u_n, v_n)-F_u(\cdot,u,v)|_{2} + | F_v( \cdot, u_n, v_n)-F_v(\cdot,u,v)|_{2} \to 0, \text{ as }\, n \to \infty.
\end{gather}	
For $(w,z) \in X$, we notice that,
\begin{equation*}
	\begin{aligned}
	&\< K'(u_n,v_n)-K'(u_0,v_0), (w,z)\>\\&=\Iom \big[ (F_u(x, u_n, v_n)-F_u(x,u_0,v_0))w
	+  (F_v(x, u_n, v_n)-F_v(x,u_0,v_0))z  \big]\, dx.\\	
	\end{aligned}	
\end{equation*}	
Using H\"older's inequality, we fetch that,
\begin{equation*}
	\begin{aligned}
		&|\< K'(u_n,v_n)-K'(u_0,v_0), (w,z)\>\\
		&\qquad \leq C \big( |F_u(\cdot, u_n, v_n)-F_u(\cdot,u_0,v_0)|_{2}+ |F_v(\cdot, u_n, v_n)-F_v(\cdot,u_0,v_0)|_{2} \big) (\|(w,z) \|_X )
	\end{aligned}	
\end{equation*}	
This yields, using (\ref{eq:F-u,v}) that,
\begin{equation*}
\|K'(u_n,v_n)-K'(u_0,v_0)\|_{X'}	\to 0 \, \text{ as }\, n \to \infty.
\end{equation*}	
This proves the lemma.
\end{proof}

We can write from \eqref{eq:J-la_1},
\begin{equation}\label{eq:+}
\begin{aligned}
	J_{\la_1}(u,v)&=\frac{1}{2}\< (u,v), (u,v)\>_X-\frac{\la_1}{2}\< \mathcal{D}^{-1} L(u,v), (u,v)\>-K(u,v)\\
	&=\frac{1}{2} \< (I-\la_1\mathcal{D}^{-1} L)(u,v), (u,v)\>_X-K(u,v), \, \text{for each}\, (u,v) \in X.
\end{aligned}	
\end{equation}
We take into account the following decomposition
\begin{gather*}
(u,v)=(\bar{u},\bar{v}) + ( u^{\perp},v^{\perp})	\in W_1 \oplus W_1^\perp.
\end{gather*}	
Using the problem $(\mathcal{E}_\la)$, we note that both $W_1$ and $W_1^\perp$ are invariant under $(I-\la_1 \mathcal{D}^{-1}L)$. Therefore, we gather that,
\begin{equation}\label{eq:++}
(I-\la_1 \mathcal{D}^{-1} L)(u,v)=(I-\la_1\mathcal{D}^{-1} L)(\bar{u}, \bar{v})+ (I-\la_1 \mathcal{D}^{-1}L)({u}^\perp, {v}^\perp), \text{ for all}\, (u,v) \in X.
\end{equation}	
Hence, from  equations (\ref{eq:+}) and (\ref{eq:++}), we accomplish
 that $J_{\la_1}$ satisfies the condition 
{{Theorem \ref{assthm.1}(a)} and thereby earning the proof of the lemma.

\begin{lemma}\label{cerami}
The functional $J_\la$ defined in  \eqref{eq:J-la_1} satisfies Cerami condition. 
\end{lemma}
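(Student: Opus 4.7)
The plan is to show that any sequence $\{(u_n, v_n)\} \subset X$ with $J_{\la_1}(u_n, v_n) \to b$ and $(1 + \|(u_n, v_n)\|_X) J'_{\la_1}(u_n, v_n) \to 0$ in $X'$ admits a convergent subsequence, and I would split the argument into a boundedness step and a strong-convergence step. The latter is routine: once $\{(u_n, v_n)\}$ is bounded in $X$, reflexivity yields weak convergence along a subsequence and strong convergence in $L^2(\Om) \times L^2(\Om)$ via \eqref{Cpt-embed}, after which the argument of Lemma \ref{palais} applies verbatim, using compactness of $L$ (Section \ref{weak-sol}) and of $K'$ (Lemma \ref{compact}). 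The real work lies in the boundedness step because $\la = \la_1$ is a resonance: the quadratic form $\|(u,v)\|_X^2 - \la_1\langle L(u,v),(u,v)\rangle$ vanishes identically on the one-dimensional eigenspace $W_1 = \Span\{(\phi_1,\psi_1)\}$.

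For boundedness, I would argue by contradiction, setting $t_n := \|(u_n, v_n)\|_X \to \infty$ and passing to the normalised sequence $(w_n, z_n) = (u_n, v_n)/t_n$. Up to a subsequence $(w_n, z_n) \deb (w_0, z_0)$ weakly in $X$, strongly in $L^r(\Om) \times L^r(\Om)$ for $1 \leq r < 2^*_s$, and a.e.\ on $\Om$. Decomposing $(u_n, v_n) = s_n(\phi_1, \psi_1) + (u_n^\perp, v_n^\perp) \in W_1 \oplus W_1^\perp$ and testing $J'_{\la_1}(u_n, v_n)$ against $(u_n^\perp, v_n^\perp)$, I would combine the spectral gap \eqref{eq:W_k-}, the growth bound $|F_u| + |F_v| \leq (a(x) + \eps)(|u_n|^2 + |v_n|^2)^{1/2} + d_\eps$ from $(H_4)$, H\"older's inequality, and the compact embedding \eqref{Cpt-embed} to arrive at
\begin{equation*}
\big(1 - \tfrac{\la_1}{\la_2}\big)\|(u_n^\perp, v_n^\perp)\|_X^2 \leq o(1)\|(u_n^\perp, v_n^\perp)\|_X + \int_\Om (a(x)+\eps)(|u_n|^2+|v_n|^2)^{1/2}(|u_n^\perp| + |v_n^\perp|)\,dx.
\end{equation*}
Dividing by $t_n^2$ and letting $n\to\infty$ forces $\|(w_n^\perp, z_n^\perp)\|_X \to 0$, so $(w_0, z_0) \in W_1$ with $\|(w_0, z_0)\|_X = 1$, i.e.\ $(w_0, z_0) = \pm (\phi_1, \psi_1)/\|(\phi_1, \psi_1)\|_X$, and $|s_n|/t_n$ stays bounded away from zero. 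Since $\phi_1, \psi_1 > 0$ in $\Om$, it follows that $|u_n(x)| + |v_n(x)| \to \infty$ a.e.\ on $\Om$ along this subsequence.

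To close the contradiction I would feed this information into the identity
\begin{equation*}
2 J_{\la_1}(u_n, v_n) - \langle J'_{\la_1}(u_n, v_n), (u_n, v_n)\rangle = \int_\Om \big(F_u(x, u_n, v_n) u_n + F_v(x, u_n, v_n) v_n - 2 F(x, u_n, v_n)\big)\,dx,
\end{equation*}
whose left side tends to $2b$ by the Cerami property. The asymptotic upper bound of $(H_8)$ together with the asymptotic non-negativity of $F_u u + F_v v$ from $(H_9)$ make the integrand suitable for a Fatou argument along the pointwise-divergent subsequence; combined with the strict inequality $a(x) < \la_2 - \la_1$ on a set of positive Lebesgue measure and the positivity of $(\phi_1, \psi_1)$, this produces a lower bound on the limit inferior that strictly exceeds $2b$, contradicting the assumed convergence. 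The principal obstruction of the whole argument is exactly this final balancing: the spectral gap is lost at resonance, and the strict-inequality hypothesis on the set $\{a < \la_2 - \la_1\}$ is precisely what prevents the Cerami sequence from drifting to infinity along $W_1$.
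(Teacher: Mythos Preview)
Your skeleton matches the paper's: argue boundedness by contradiction via the normalised sequence and the splitting $W_1\oplus W_1^\perp$, then upgrade weak to strong convergence as in Lemma~\ref{palais}. However, two steps in your outline do not close as written.

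\textbf{The perpendicular estimate.} From $(H_4)$ alone the inequality
\[
\Big(1-\tfrac{\la_1}{\la_2}\Big)\|(u_n^\perp,v_n^\perp)\|_X^2 \;\leq\; o(1)\,\|(u_n^\perp,v_n^\perp)\|_X + \int_\Om (a(x)+\eps)(|u_n|^2+|v_n|^2)^{1/2}\big(|u_n^\perp|+|v_n^\perp|\big)\,dx
\]
does \emph{not} force $\|(w_n^\perp,z_n^\perp)\|_X\to 0$ after dividing by $t_n^2$: the integral on the right only scales to a bounded quantity (Cauchy--Schwarz gives at best $C\|(w_n^\perp,z_n^\perp)\|_X$), so you obtain boundedness, not vanishing, and hence cannot conclude $\|(w_0,z_0)\|_X=1$. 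The paper supplies the missing decay by using $(H_8)$ \emph{first}: from
\[
\langle J'_{\la_1}(u_n,v_n),(u_n,v_n)\rangle-2J_{\la_1}(u_n,v_n)=\int_\Om\!\big(F_u u_n+F_v v_n-2F\big)\,dx
\]
and the lower bound $2F-(F_u u+F_v v)\geq C_7(|u|^2+|v|^2)^{\gamma/2}-C_8$ one gets $t_n^{-1}\int_\Om(|u_n|^2+|v_n|^2)^{\gamma/2}\,dx\to 0$. The nonlinear term in the $\perp$-estimate is then handled by H\"older with exponents $\gamma$ and $\gamma'=\gamma/(\gamma-1)$ (here is where the constraint $\gamma\geq 2N/(N+2s)$ is used, so that $\gamma'\leq 2^*_s$), followed by Young's inequality, which converts it into $C(|u_n|_\gamma^\gamma+|v_n|_\gamma^\gamma)+C$. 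Dividing by $t_n$ (not $t_n^2$) and invoking the $L^\gamma$ smallness just obtained yields $\|(u_n^\perp,v_n^\perp)\|_X/t_n\to 0$.

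\textbf{The final contradiction.} You invoke $(H_9)$ and the set $\{a<\la_2-\la_1\}$, but neither enters here; those hypotheses are used only in Lemmas~\ref{anti_coercive} and~\ref{coercive.2} respectively. Once $(\tilde u_0,\tilde v_0)\in W_1$ with $\|(\tilde u_0,\tilde v_0)\|_X=1$, the set $\{(\tilde u_0,\tilde v_0)\neq(0,0)\}$ has positive measure, $|u_n|^2+|v_n|^2\to\infty$ a.e.\ on it, and the same lower bound from $(H_8)$ gives
\[
\int_{\{(\tilde u_0,\tilde v_0)\neq 0\}}\!\!\big(2F-F_u u_n-F_v v_n\big)\,dx\;\longrightarrow\;+\infty,
\]
while the Cerami hypothesis forces $\langle J'_{\la_1}(u_n,v_n),(u_n,v_n)\rangle-2J_{\la_1}(u_n,v_n)$ to remain bounded. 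That is the contradiction; no Fatou balancing against $a(x)$ is needed.
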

\begin{proof}
Let  for $b \in \R$, $\{(u_n, v_n)\}_{n \geq 1} \subset X$ be a sequence such that
\begin{equation}\label{Eq:J_la}
J_{\la_1}(u_n,v_n) \to b \,\text{and}\, \big( 1+ \| (u_n,v_n)\|_X \big) \| J'_{\la_1}(u_n,v_n)\|_{X'} \to 0, \text{as}\, n \to \infty.
\end{equation}
We need to show that there exists $(u_0,v_0) \in X$ such that up to a subsequence,
\begin{gather*}
\| (u_n,v_n)-(u_0,v_0) \|_X \to 0 \,\text{as}\, n \to \infty.
\end{gather*}
\textit{Claim:} $(u_n,v_n)$ is bounded sequence in $X$.

We achieve the proof by the method of contradiction.
Indeed, there exists a subsequence which is again denoted by $(u_n,v_n)$
such that $\|(u_n,v_n)\|_X \to 
\infty $ as $n \to \infty$. Let us write,
\begin{gather*}
( \tilde{u}_n, \tilde{v}_n)= \frac{(u_n,v_n)}{\| (u_n,v_n)\|_X},\,	\, n \in \N.
\end{gather*}	
Hence, $\|(\tilde{u}_n, \tilde{v}_n)\|_X=1 $, that is, $(\tilde{u}_n, \tilde{v}_n)$ is a bounded sequence, which witnesses that there exists $(\tilde{u}_0, \tilde{v}_0) \in X$ such that up to a subsequence,
$(\tilde{u}_n, \tilde{v}_n) \deb (\tilde{u}_0, \tilde{v}_0) $ in $X$, $(\tilde{u}_n, \tilde{v}_n) \to \tilde{u}_0, \tilde{v}_0$ strongly in $L^2(\Om) \times L^2(\Om)$, $\tilde{u}_n \to \tilde{u}_0$ a.e.in $\Rn$ and $\tilde{v}_n \to  \tilde{v}_0$ a.e. in $\Rn$. By Hypothesis \textit{$(H_8)$}, there exists $C_7,C_8>0$ such that
\begin{gather}\label{eq:&}
	2F(x,u,v)-(F_u(x,u,v)u+F_v(x,u,v)v) \geq C_7 \big( |u|^2+|v|^2 \big)^{\frac{\ga}{2}}-C_8,
\end{gather}	
for a.e. $x \in \Om$, for all $(u,v) \in \R^2$.
Therefore, we get,
$$ \<J'_{\la_1}(u_n,v_n), (u_n,v_n)\>-2 J_{\la_1}(u_n,v_n) \geq C_7 \Iom (|u_n|^2+|v_n|^2 )^{\frac
{\ga}{2}}-C_8 |\Om|.
$$
As before, we infer that, 
\begin{equation}\label{*2}
\frac{1}{\| (u_n,v_n)\|_X} \Iom ( |u_n|^2+|v_n|^2 )^{\frac{\ga}{2}} \, dx \to 0 \,\text{as}\, n \to \infty.
\end{equation}
Let us monitor the decomposition 
\begin{gather*}
	(u_n,v_n)=(\bar{u}_n, \bar{v}_n) + (u_n^\perp,v_n^\perp) \in W_1 \oplus W_1^\perp. 
\end{gather*}	
Using the embedding $X \hookrightarrow L^r(\Om)$ for $1 \leq r \leq 2^*_s$, estimate (\ref{eq:W_k-}) and H\"older's inequality,
we note that,
\begin{equation*}
	\begin{aligned}
	&\<J_{\la_1}(u_n,v_n), (u_n^\perp,v_n^\perp)\>\\
	&\geq (1-\frac{\la_1}{\la_2})	\| (u_n^\perp,v_n^\perp) \|_X^2-\Iom \bigg[ |F_u(x, u_n,v_n)||u_n^\perp| + |F_v(x,u_n,v_n)| |v_n^\perp|  \bigg]	\, dx\\
	&\geq (1-\frac{\la_1}{\la_2})	\| (u_n^\perp,v_n^\perp) \|_X^2-d_\eps 
	\big( |u_n^\perp|_{1}+|v_n^\perp|_{1}  \big)\\
	&\quad-C_8 \big( |u_n|_{\ga}+|v_n|_{\ga}  \big)
	\big( |u_n^\perp|_{\frac{\ga}{\ga-1}}+|v_n^\perp|_{\frac{\ga}{\ga-1}}  \big)\\
	&\geq (1-\frac{\la_1}{\la_2})	\| (u_n^\perp,v_n^\perp) \|_X^2-d_\eps  C_9 \| (u_n^\perp,v_n^\perp)\|_X-C_{10}  \big( |u_n|_{\ga}+|v_n|_{\ga}  \big)	\| (u_n^\perp,v_n^\perp) \|_X.
	\end{aligned}	
\end{equation*}	
As a consequence, we get hold of the following,
\begin{equation*}
	\begin{aligned}
\big(1-\frac{\la_1}{\la_2} \big)	\| (u_n^\perp,v_n^\perp) \|_X &\leq \<J_{\la_1}(u_n,v_n), \frac{(u_n^\perp,v_n^\perp)}{\| (u_n^\perp,v_n^\perp)\|_X}\>+d_\eps C_9\\
&\qquad +\frac{C_{10}}{\ga} \big(|u_n|^{\ga}_{{\ga}}+ |v_n|^{\ga}_{{\ga}}\big)+C_{10}( \frac{\ga-1}{\ga}).
  \end{aligned}
\end{equation*}
Using (\ref{*2}) and (\ref{Eq:J_la}), we infer,
\begin{gather}\label{*1}
\frac{\| (u_n^\perp,v_n^\perp) \|_X}{\| (u_n,v_n) \|_X}	\to 0 \, \text{ as }\, n \to \infty.
\end{gather}	
In view of (\ref{*1}), we mark that,
\begin{equation*}
\begin{aligned}
	&\bigg| \frac{(u_n,v_n)}{\| (u_n,v_n) \|_X}-(\tilde{u}_0, \tilde{v}_0 )  \bigg|_{L^2}\\
	&=\bigg| (\tilde{u}_n, \tilde{v}_n)-(\tilde{u}_0, \tilde{v}_0)-\frac{({u}_n^\perp, {v}_n^\perp)}{\| (u_n,v_n)\|_X}\bigg|_{L^2}\\
	&\leq \big| (\tilde{u}_n, \tilde{v}_n)-(\tilde{u}_0, \tilde{v}_0)|_{L^2}+C\frac{\|({u}_n^\perp, {v}_n^\perp) \|_X}{\| (u_n,v_n)\|_X} \to 0, \,\text{as}\, n \to \infty. \\
\end{aligned}		
\end{equation*}	
As $W_1$ is a finite dimensional subspace of $X$ and $(\bar{u}_n, \bar{v}_n) \in W_1$ for all $n \in \N$, so
\begin{equation*}
\frac{(u_n,v_n)}{\| (u_n,v_n)\|_X} \to (\tilde{u}_0, \tilde{v}_0) \, \text{ in }\, W_1.	
\end{equation*}	
Consequently since  any two norms are equivalent in $W_1$, using (\ref{*1}), we observe, 
\begin{equation*}
\| 	(\tilde{u}_n, \tilde{v}_n)-(\tilde{u}_0, \tilde{v}_0) \|_X
\leq \Big\| \frac{({u}_n, {v}_n)}{\| ({u}_n, {v}_n)\|_X}-(\tilde{u}_0, \tilde{v}_0) \Big\|_X+ \frac{\| ({u}_n^\perp, {v}_n^\perp)\|_X}{\| (u_n,v_n)\|_X} \to 0,
\end{equation*}	
as $n \to \infty$. Hence, $(\tilde{u}_n, \tilde{v}_n) \to (\tilde{u}_0, \tilde{v}_0)$  in $X$ and since $\| (\tilde{u}_n, \tilde{v}_n) \|_X=1$, for all $n \in \N$, $\| (\tilde{u}_0, \tilde{v}_0) \|_X=1$. This yields us, 
$$\big| \{ y \in \Om: (\tilde{u}_0, \tilde{v}_0)(y) \neq (0,0) \} \big| >0.$$
Using (\ref{eq:&}), we make out that,
\begin{gather}
\lim_{n \to \infty} \int_{ \{ x \in \Om: (\tilde{u}_0, \tilde{v}_0)(x) \neq (0,0)  \}}\Big(
2F(x,u_n,v_n)-\big( F_u(x,u_n,v_n)u_n+F_v(x,u_n,v_n)v_n) \Big)\, dx=\infty.
\end{gather}	
On the other way, we deduce that,
\begin{equation*}
	\begin{aligned}
		&\lim_{n \to \infty} \int_{ \{ x \in \Om: (\tilde{u}_0, \tilde{v}_0)(x) \neq (0,0)  \}}(
		2F(x,u_n,v_n)-\big( F_u(x,u_n,v_n)u_n+F_v(x,u_n,v_n)v_n) \big)\, dx\\
		&\leq \lim_{n \to \infty}\Iom \big| 	2F(x,u_n,v_n)-\big( F_u(x,u_n,v_n)u_n+F_v(x,u_n,v_n)v_n)\big|\, dx\\
		&\leq \lim_{n \to \infty} | \< J'_{\la_1}(u_n,v_n), (u_n,v_n)\>-2J_\la(u_n,v_n)\big| \leq 2C_{11}.
	\end{aligned}
\end{equation*}	
This gives us a contradiction to (\ref{*1}), deducing $\{ (u_n,v_n)\}$ is a bounded sequence in $X$. So, there 
exists $(u,v) \in X$, up to a subsequence, we get,
$$ (u_n,v_n) \deb (u,v) \,\text{weakly in}\, X. 
$$
Proceeding similarly, as in the Lemma \ref{palais}, we infer that,
$$ \| (u_n,v_n)-(u,v)\|_X \to 0\, \text{as}\, n \to \infty,
$$
thereby fetching the proof.
\end{proof}
\begin{lemma}\label{anti_coercive} Let $J_{\la_1}$ be defined in \eqref{eq:J-la_1}. Then, $J_{\la_1}$ is anti-coercive in $W_1$, that is, $J_\la(u,v) \to -\infty$ as $(u,v) \in W_1$ with $\| (u,v) \|_X \to \infty$.
\end{lemma}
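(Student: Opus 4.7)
The plan is to reduce the problem to a one-dimensional asymptotic analysis. Since $\la_1$ is simple, $W_1 = V_1 = \mathrm{span}\{(\phi_1,\psi_1)\}$, and any $(u,v) \in W_1$ has the form $t(\phi_1,\psi_1)$ with $\|(u,v)\|_X = |t|\,\|(\phi_1,\psi_1)\|_X$. The eigenvalue identity $\|(\phi_1,\psi_1)\|_X^2 = \la_1 \langle L(\phi_1,\psi_1),(\phi_1,\psi_1)\rangle$ makes the quadratic portion of $J_{\la_1}$ cancel on $W_1$, so $J_{\la_1}(t\phi_1,t\psi_1) = -\Phi(t)$ where $\Phi(t) := \int_\Om F(x,t\phi_1,t\psi_1)\,dx$. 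It therefore suffices to show $\Phi(t) \to +\infty$ as $|t| \to \infty$.

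For this I combine $(H_8)$ and $(H_9)$ via a differential-inequality argument. The pointwise form of $(H_8)$ extracted in Lemma \ref{cerami}, substituted with $(u,v) = (t\phi_1,t\psi_1)$ and integrated over $\Om$, yields
\begin{equation*}
2\Phi(t) - t\Phi'(t) \geq C_7 A_\ga |t|^\ga - C_8 |\Om|, \qquad A_\ga := \int_\Om (\phi_1^2+\psi_1^2)^{\ga/2}\,dx > 0.
\end{equation*}
Rewritten in terms of $H(t) := \Phi(t)/t^2$ on $(0,\infty)$, this becomes $H'(t) \leq -C_7 A_\ga t^{\ga-3} + C_8|\Om| t^{-3}$, so $H$ is eventually decreasing. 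In parallel, $(H_9)$ integrated along the ray $t(\phi_1,\psi_1)$ produces $\liminf_{t\to\infty} \Phi(t)/t^2 \geq 0$ by taking $\eps \downarrow 0$, so $H$ converges monotonically to some $L \in [0,\infty)$. Integrating the differential inequality from $t$ up to $+\infty$ then extracts $\Phi(t) \geq L t^2 + \frac{C_7 A_\ga}{2-\ga}|t|^\ga - \frac{C_8|\Om|}{2}$, which diverges to $+\infty$. The case $t \to -\infty$ is handled by the same argument on $(-\infty,0)$.

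The main obstacle is the borderline $L = 0$ case, in which monotone convergence of $H$ alone does not reveal how fast $\Phi$ grows; one must perform the improper integration of the differential inequality all the way to $+\infty$, which requires $\ga < 2$ so that $\int_1^\infty s^{\ga-3}\,ds$ converges. This restriction is in fact forced by the joint compatibility of $(H_8)$ with $(H_9)$: were $\ga \geq 2$, the monotone decay of $H$ would drive it to $-\infty$, contradicting the lower bound from $(H_9)$. Hence the argument remains internally consistent with the admissible range $\ga \geq \frac{2N}{N+2s}$ permitted by $(H_8)$.
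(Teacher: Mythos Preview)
Your proof is correct and rests on the same two ingredients as the paper's argument: the liminf bound on $F/|(\cdot)|^2$ coming from $(H_9)$, and the differential inequality for $F/|(\cdot)|^2$ coming from $(H_8)$. The packaging, however, is genuinely different. The paper works \emph{pointwise} in $\R^2$: for a fixed unit vector $(u_0,v_0)$ it shows $\liminf_{l\to\infty} F(x,lu_0,lv_0)/l^2 \geq 0$ from $(H_9)$, then uses $(H_8)$ only through the crude consequence $F_u u + F_v v - 2F \leq -T$ for $|(u,v)|\geq C_T$, integrates $\frac{d}{dr}\bigl(F/r^2\bigr)\leq -T/r^3$ against $\liminf F/r^2\geq 0$, and concludes $F(x,u,v)\geq T/2$ whenever $|(u,v)|\geq C_T$, uniformly in $x$. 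Only at the very end does it integrate over $\Om$. You instead integrate over $\Om$ first and run the whole argument at the level of $\Phi(t)$, retaining the full power $|t|^\ga$ from $(H_8)$; this forces you to treat the improper integral $\int_t^\infty s^{\ga-3}\,ds$ and hence to dispose of $\ga\geq 2$ by the separate contradiction argument. That extra step is correct and your observation that $(H_8)$ with $\ga\geq 2$ is incompatible with $(H_9)$ is a nice structural remark, but the paper's route sidesteps the issue entirely by discarding the exponent $\ga$ early on. In exchange, your approach yields a sharper quantitative lower bound $\Phi(t)\geq Lt^2 + c|t|^\ga - C$, whereas the paper only gets $\Phi(t)\to+\infty$; and the paper's final step (passing from the pointwise bound $F\geq T/2$ on $\{|(u,v)|\geq C_T\}$ to the integral bound over $\Om$) is handled somewhat loosely, since $\phi_1,\psi_1$ need not be bounded away from zero.
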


\begin{proof}
Let $(u_0,v_0) \in \R^2$ with $|u_0|^2+|v_0|^2=1$. Let $\eps>0$ be arbitrary. Then, by Hypothesis \textit{$(H_9)$}, there exists $M_\eps$ such that,
\begin{gather}\label{F}
	\frac{F_u(x,u,v)u+F_v(x,u,v)v}{|u|^2+|v|^2} \geq -\eps, \text{ for all}\,( |u|^2+|v|^2)^{\frac{1}{2}} \geq M_\eps.
\end{gather}	
We then calculate for $l>M_\eps$, using (\ref{F}), that
\begin{equation*}
	\begin{aligned}
	F(x,lu_0,lv_0)&=F(x, M_\eps x_0, M_\eps v_0)+\int_{M_\eps}^{l} \frac{d}{dt}(F(x,tu_0,tv_0))\, dt\\
	&\geq F(x, M_\eps x_0, M_\eps v_0)+\int_{M_\eps}^l \frac{ \big( F_u(x,tu_0,tv_0)tu_0+F_v(x,tu_0,tv_0)tv_0  \big)  }{t}\, dt\\
		&\geq F(x, M_\eps x_0, M_\eps v_0)-\eps \int_{M_\eps}^lt \, dt\\
		&\geq F(x, M_\eps x_0, M_\eps v_0)-\frac{\eps}{2}l^2.
	\end{aligned}	
\end{equation*}	
This implies,
\begin{gather*}
	\frac{F(x,lu_0,lv_0)}{l^2} \geq 	\frac{F(x,M_\eps u_0,M_\eps v_0)}{l^2}-\frac{\eps}{2},
\end{gather*}	
for all $l>M_\eps$, thereupon letting $l \to \infty$ and since $\eps>0$ is arbitrary,
 we see that,
\begin{gather*}
	\liminf_{l \to \infty} \frac{F(x,lu_0,lv_0)}{l^2} \geq 0.
\end{gather*}	
On the other hand, for any $r \in \R$,
\begin{equation}\label{F_i}
\begin{aligned}
F_u(x, ru_0, rv_0)r u_0&+ F_v(x, ru_0, rv_0)rv_0-2F(x, ru_0, rv_0)\\
&\quad \leq C_8-C_7 \big( |r u_0|^2+|r v_0|^2  \big)^{\frac{\ga}{2}}=C_8-C_7|r|^{\ga}.
\end{aligned}
\end{equation}
For any $T>0$, we take,
$$ |r| \geq \big( \frac{C_8+T}{C_7}  \big)^{\frac{1}{\ga}}:=C_T.
$$
Thus, $C_7|r|^{\ga}-C_8 \geq T$. Hence, from (\ref{F_i}), we obtain,
\begin{gather*}
	F_u(x, ru_0, rv_0)r u_0+ F_v(x, ru_0, rv_0)rv_0-2F(x, ru_0, rv_0) \leq -T,\,\text{for all } |r| \geq C_T.
\end{gather*}	
Consequently, for $C_T \leq r<l$,
\begin{gather*}
\frac{F(x, ru_0, rv_0)}{r^2} \geq 	\frac{F(x, l u_0, l v_0)}{l^2}+\frac{T}{2} \big( \frac{1}{s^2}-\frac{1}{l^2} \big).
\end{gather*}	
On letting $l \to \infty$, it is elementary to see that,
\begin{gather*}
	F(x, ru_0, rv_0) \geq \frac{T}{2} \,\text{ a.e.}\, x \in \Om.
\end{gather*}	
In a similar manner, for $s \leq -C_T$,
$$ F(x, ru_0, rv_0) \geq \frac{T}{2}\,\text{ a.e.}\, x \in \Om.
$$
Hence, for any $T>0$, there exists $C_T>0$ such that,
\begin{gather*}
	F(x, ru_0, rv_0) \geq \frac{T}{2} \,\text{ for all }\, |s| \geq C_T \,\text{ and a.e. }\, x \in \Om.
\end{gather*}	
As a result, we obtain,
$$ J_{\la_1}(u,v) \leq -K(u,v)=-\Iom F(x,u,v)\,dx \leq -\frac{T}{2}\mathcal{L}^N(\Om),
$$
for $(u,v) \in W_1$ with $\| (u,v)\|_X \geq C_T$. This signifies,
$J_{\la_1}(u,v) \to -\infty$ as $(u,v) \in W_1$ with $\| (u,v)\|_X \to \infty$,  concluding the proof of the lemma.
\end{proof}

\begin{lemma}\label{coercive.2}
$J_{\la_1}$ is coercive in $W_1^\perp$, that is,
$$ J_{\la_1}(u,v) \to \infty \,\text{as}\, (u,v) \in W_1^\perp \,\text{ with }\,
\|(u,v)\|_X \to \infty.
$$
\end{lemma}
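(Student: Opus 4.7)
The plan is a contradiction argument that ends with a strong unique-continuation input for the fractional Laplacian. Suppose there exist $M\in\R$ and a sequence $\{(u_n,v_n)\}\subset W_1^\perp$ with $\|(u_n,v_n)\|_X\to\infty$ and $J_{\la_1}(u_n,v_n)\leq M$. As in Lemma \ref{compact}, hypothesis $(H_4)$ supplies, for each $\eps>0$, a constant $d_\eps>0$ with
\[
|F(x,u,v)|\leq \tfrac12(a(x)+\eps)(u^2+v^2)+d_\eps\,(u^2+v^2)^{1/2}
\]
a.e.\ in $\Om$, for every $(u,v)\in\R^2$. Normalising $(\tilde u_n,\tilde v_n):=(u_n,v_n)/\|(u_n,v_n)\|_X$ and extracting a subsequence, I would arrange $(\tilde u_n,\tilde v_n)\deb(\tilde u,\tilde v)$ in $X$, strongly in $L^2(\Om)\times L^2(\Om)$ by \eqref{Cpt-embed}, and a.e.\ in $\Om$. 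The limit lies in $W_1^\perp$ by weak closedness.

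The heart of the argument is to divide the bound $J_{\la_1}(u_n,v_n)\leq M$ by $\|(u_n,v_n)\|_X^2$. Since $g\equiv 0$, $\<L(u,v),(u,v)\>=\Iom(fu^2+hv^2)\,dx$; combining this with the $F$-bound above and the $L^2$-convergence of $(\tilde u_n,\tilde v_n)$, and letting $n\to\infty$ then $\eps\to 0$, I would obtain
\[
1\leq \la_1\Iom(f\tilde u^2+h\tilde v^2)\,dx+\Iom a(x)(\tilde u^2+\tilde v^2)\,dx.
\]
Invoking successively $a(x)\leq \la_2-\la_1$, $f,h\geq 1$, the Rayleigh inequality \eqref{eq:W_k-} with $k=1$, and weak lower semicontinuity $\|(\tilde u,\tilde v)\|_X^2\leq 1$, the right-hand side is bounded above by $\la_2\Iom(f\tilde u^2+h\tilde v^2)\,dx\leq\|(\tilde u,\tilde v)\|_X^2\leq 1$, so the entire chain collapses to equalities. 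This forces three facts at once: $\|(\tilde u,\tilde v)\|_X=1$ (so the convergence is actually strong in $X$); $(\tilde u,\tilde v)$ saturates the Rayleigh inequality on $W_1^\perp$ and is therefore a nontrivial eigenfunction of $(\mathcal{E}_\la)$ at the level $\la=\la_2$; and $\Iom(\la_2-\la_1-a(x))(\tilde u^2+\tilde v^2)\,dx=0$, which forces $\tilde u=\tilde v=0$ a.e.\ on the set $\{a<\la_2-\la_1\}$ of positive Lebesgue measure.

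The main obstacle lies in closing the contradiction. Because $g\equiv 0$, the system $(\mathcal{E}_{\la_2})$ decouples into two independent scalar problems $(-\De)^s w=\la_2\,\omega(x)w$ in $\Om$ with $w=0$ in $\Om^c$ and $\omega\in L^\infty(\Om)$ equal to $f$ or $h$. I would then invoke the strong unique continuation principle for the fractional Laplacian, which prevents a nontrivial weak solution of such a linear eigenvalue equation from vanishing on a subset of $\Om$ of positive Lebesgue measure. Applied to both $\tilde u$ and $\tilde v$, this forces $\tilde u\equiv\tilde v\equiv 0$, contradicting $\|(\tilde u,\tilde v)\|_X=1$, so coercivity on $W_1^\perp$ follows. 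The unique-continuation step is the only non-variational ingredient; the rest is a standard combination of the growth control in $(H_4)$, the min-max inequality \eqref{eq:W_k-}, and weak compactness in $X$.
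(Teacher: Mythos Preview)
Your argument is correct and runs on the same engine as the paper's (contradict, normalise, pass to the limit, exploit $a\le\la_2-\la_1$ on a set of positive measure, invoke a PDE fact to kill the limit), but the packaging differs in two respects. First, the paper does not contradict coercivity directly; it instead proves an auxiliary quadratic-form estimate (a ``Claim'' stating that $\tfrac12\|(u,v)\|_X^2-\tfrac{\la_1}{2}\<L(u,v),(u,v)\>-\tfrac12\Iom a(|u|^2+|v|^2)\,dx\geq\tilde\la\|(u,v)\|_X^2$ on $W_1^\perp$) by the same normalised-sequence contradiction, and then feeds that estimate together with the $(H_4)$-growth bound on $F$ into $J_{\la_1}$ to read off coercivity. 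That two-step layout cleanly separates the linear spectral inequality from the nonlinear remainder, while your direct route is shorter. Second, at the PDE step the paper simply invokes a ``strong maximum principle'' to conclude the limit vanishes identically; your version is more informative here, since the collapse of the chain of inequalities you wrote explicitly forces equality in \eqref{eq:W_k-} and hence identifies $(\tilde u,\tilde v)$ as a $\la_2$-eigenfunction, after which strong unique continuation for $(-\De)^s$ with bounded potential is exactly the right tool (and available in the literature). So your approach and the paper's agree in substance; yours is more streamlined and makes the eigenfunction identification explicit, while the paper's isolates a reusable uniform lower bound on the quadratic part.
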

\begin{proof}
 We first establish the following Claim.\\
\textit{\bf Claim}. There exists $\tilde{\la}>0$ such that
\begin{equation}\label{Eq:}
\begin{aligned}
\frac{1}{2}\| (u,v)\|_X^2-\frac{\la_1}{2} \<L(u,v), (u,v)\>-\frac{1}{2}\Iom a(x) \big( |u|^2+|v|^2 \big)\,dx
\geq \tilde{\la} \|(u,v)\|_X^2 \,\text{ for all } (u,v) \in W_1^\perp.
\end{aligned}
\end{equation}
We prove by the method of contradiction. This imports, for each $n \in \N$, there exists $(u_n,v_n) \in X$ such that $\| (u_n,v_n)\|_X=1$, but
\begin{equation}\label{eq:n}
\frac{1}{2}\| (u_n,v_n)\|_X^2-\frac{\la_1}{2}\< L(u_n,v_n), (u_n,v_n)\>-\frac{1}{2}\Iom a(x) \big( |u_n|^2+|v_n|^2 \big)\,dx<\frac{1}{n}.
\end{equation}
As $(u_n,v_n)$ is bounded in $X$, there exists $(u_0,v_0) \in W_1^\perp$ such that up to a subsequence,
$(u_n, v_n) \deb (u_0,v_0)$ weakly in $W_1^\perp$ and $(u_n, v_n) \to (u_0,v_0)$ strongly in $L^2(\Om) \times L^2(\Om)$. By the compactness of the operator $L$ and weak lower semicontinuity of the norm $\|\cdot\|_X$, it is evident that,
\begin{equation*}
	\begin{aligned}
&\frac{1}{2}\| (u_0,v_0)\|_X^2-\frac{\la_1}{2}\< L(u_0,v_0), (u_0,v_0)\>-\frac{1}{2}\Iom a(x) \big( |u_0|^2+|v_0|^2 \big)\,dx\\
&\leq \liminf_{n \to \infty} \Big( \frac{1}{2}\| (u_n,v_n)\|_X^2-\frac{\la_1}{2}\< L(u_n,v_n), (u_n,v_n)\>-\frac{1}{2}\Iom a(x) \big( |u_n|^2+|v_n|^2 \big)\,dx    \Big).
	\end{aligned}
\end{equation*}	
Subsequently, we verify,
\begin{gather*}
	\frac{1}{2}\| (u_0,v_0)\|_X^2-\frac{\la_1}{2}\< L(u_0,v_0), (u_0,v_0)\>
	\leq \frac{1}{2}\Iom a(x) \big( |u_0|^2+|v_0|^2 \big)\,dx.
\end{gather*}	
As $(u_0,v_0) \in W_1^\perp,$ using the hypothesis
$g(x) \equiv 0$ and $\min_{x \in \bar{\Om}}\{f(x), h(x)\} \geq 1$, we infer that,
\begin{gather*}
	\frac{1}{2} ( \la_2-\la_1) \<L(u_0,v_0), (u_0,v_0)\> \leq \frac{1}{2}\Iom a(x) ( |u_0|^2+|v_0|^2 )\, dx.
\end{gather*}	
This implies,
$$ (\la_2-\la_1) \Iom (|u_0|^2+|v_0|^2) \leq \Iom a(x) (|u_0|^2+|v_0|^2)\, dx,
$$
which in turn yields, since $a(x) \leq \la_2-\la_1$ a.e. $x \in \Om$,
$$  0 \leq \Iom \big[ a(x)-(\la_2-\la_1) \big] (|u_0|^2+|v_0|^2)\, dx \leq 0.$$
As $a(x)<\la_2-\la_1$ on a subset of $\Om$ of positive measure, we have, $u_0(x)=v_0(x)=0$ on $\{ x: a(x) \leq \la_2-\la_1 \}$.
By strong maximum principle, it follows that, $u_0 \equiv 0 \equiv v_0$ a.e. in $\Om$. However, from (\ref{eq:n}), it is obvious that,
\begin{gather*}
	\la_1 \< L(u_0,v_0), (u_0,v_0)\>_X+\Iom a(x) \big( |u_0|^2+|v_0|^2 \big)\, dx=1.
\end{gather*}	
This is a contradiction to the fact that $u_0 \equiv 0 \equiv v_0$ a.e. in $\Om$. Hence, (\ref{Eq:}) holds true, thereby justifying the Claim. 

Using embedding \ref{Sob-embed} together with (\ref{Eq:F-})and (\ref{Eq:}) yields for all $(u,v) \in W_1^\perp$ and for $\eps>0$ small,
\begin{equation*}
\begin{aligned}
J_{\la_1}(u,v)&\geq \frac{1}{2}\| (u,v)\|_X^2-\frac{\la_1}{2}\< L(u,v), (u,v)\>-\frac{1}{2}\Iom a(x) ( |u|^2+|v|^2 )\, dx\\
&\quad -\frac{\eps}{2} \Iom (|u|^2+|v|^2)\, dx-\frac{d_\eps}{2}\Iom (|u|^2+|v|^2)^{\frac{1}{2}}\, dx-\Iom |F(x,0,0)|\,dx\\
&\geq ( \tilde{\la} -\frac{C\eps}{2}) \| (u,v)\|_X^2-C d_\eps \| (u,v)\|_X-\Iom |F(x,0,0)|\,dx.
\end{aligned}		
\end{equation*}	
Henceforth, choosing $\eps<\frac{2\tilde{\la}}{C}$, we note that,
$J_{\la_1}(u,v) \to \infty$ as $(u,v) \in W_1^\perp$ with $\|(u,v)\|_X \to \infty$, settling  the proof of this lemma.
\end{proof}
\begin{proof}[Proof of Theorem \ref{thm-2}]
Bringing together all the Lemmas \ref{compact}-\ref{coercive.2} above, we conclude that $J_{\la_1}$ satisfies all the conditions of {Theorem \ref{assthm.2}}, thereupon proving that problem $(\mathcal{P}_{\la_1})$ has a solution.
\end{proof}


\bibliography{biblio.bib}
\bibliographystyle{acm} 
\end{document}